\documentclass[leqno,final]{siamltex}
\usepackage{amsmath, dsfont}
\usepackage{graphicx,color}
\usepackage{amssymb}

\usepackage{bm}
\usepackage{float}
\usepackage{setspace}
\usepackage{subfig}
\usepackage{multirow}
\usepackage[labelfont=bf]{caption}

\setlength{\hoffset}{.7in}
\newtheorem{remark}{Remark}

\newcommand{\E}{\mathcal{E}}

\newcommand{\Eb}[1]{\E \left[ {#1} \right]}

\usepackage{amsopn}
\def\O{\Omega}
\def\D{\mathcal{D}}
\def\F{\mathcal{F}}
\def\P{\mathbb{P}}
\def\E{\mathbb{E}}

\def\<{\left\langle}
\def\>{\right\rangle}

\begin{document}
	
	\title{Analysis of a mixed finite element method for stochastic Cahn-Hilliard equation with multiplicative noise}
		
	\author{
Yukun Li\thanks{Department of Mathematics,
           University of Central Florida, Orlando, FL 32816 ({\tt yukun.li@ucf.edu}). The work of this author was partially supported by the NSF grant DMS-2110728.}
\and
Corey Prachniak\thanks{Department of Mathematics,
           University of Central Florida, Orlando, FL 32816 ({\tt prachniak@knights.ucf.edu}). The work of this author was partially supported by the NSF grant DMS-2110728.}
\and
Yi Zhang\thanks{Department of Mathematics and Statistics,
          The University of North Carolina at Greensboro, NC 27402  ({\tt y\_zhang7@uncg.edu}). The work of
this author was partially supported by the NSF grant DMS-2111004.}
}

\maketitle

	\begin{abstract} 
	This paper proposes and analyzes a novel fully discrete finite element scheme with the interpolation operator for stochastic Cahn-Hilliard equations with functional-type noise. The nonlinear term satisfies a one-side Lipschitz condition and the diffusion term is globally Lipschitz continuous. The novelties of this paper are threefold. First, the $L^2$-stability ($L^\infty$ in time) and the discrete $H^2$-stability ($L^2$ in time) are proved for the proposed scheme. The idea is to utilize the special structure of the matrix assembled by the nonlinear term. None of these stability results has been proved for the fully implicit scheme in existing literature due to the difficulty arising from the interaction of the nonlinearity and the multiplicative noise. Second, the higher moment stability in $L^2$-norm of the discrete solution is established based on the previous stability results. Third, the H\"older continuity in time for the strong solution is established under the minimum assumption of the strong solution. Based on these, the discrete $H^{-1}$-norm of the strong convergence is discussed. Several numerical experiments including stability and convergence are also presented to validate our theoretical results.
	\end{abstract}
	
\begin{keywords}
Stochastic Cahn-Hilliard equations; multiplicative noise; higher moment stability; strong convergence.
\end{keywords}

\begin{AMS}
60H35,
65N12, 
65N15, 
65N30 
\end{AMS}

		\section{Introduction}
Consider the following stochastic Cahn-Hilliard (SCH) problem:
\begin{align}
\label{eq:SCH}
du = \Bigl[ - \Delta \Big(\epsilon \Delta u - \frac{1}{\epsilon} f(u) \Big) \Bigr] dt + \delta g(u)\,dW_t & \quad \text{in} \ \D_T:= \D \times (0,T],\\
\label{eq:SCH:b}
\frac{\partial u}{\partial n} = \frac{\partial}{\partial n} \Big(\epsilon \Delta u - \frac{1}{\epsilon} f(u)\Big) = 0 & \quad \text{in} \ \partial \D_T:=\partial \D \times (0,T], \\
\label{eq:SCH:i}
u = u_0 & \quad \text{in} \ \D \times \{ 0 \},
\end{align}
where $\D \subset \mathbb{R}^d$ ($d = 2, 3$) is a bounded domain, $n$ is the unit outward normal, $\delta>0$ is a positive constant, and $W_t$ is a standard real-valued Wiener process on a filtered probability space $(\O, \F, \{ \F_t: t \geq 0 \}, \P)$. The function $f$ is the derivative of a smooth double equal well potential function $F$ given by
\begin{align}\label{eq:F}
F(u) = \frac{1}{4} (u^2 - 1)^2. 
\end{align}
The diffusion term $g$ is assumed to have zero mean, is globally Lipschitz continuous (\ref{eq20180812_2}), and satisfies the growth condition (\ref{eq20180813_1}), i.e.,
\begin{align}\label{eq20180812_2}
|g(a)-g(b)|&\le C|a-b|,\\
|g(a)|^2 &\le C(1+a^2).\label{eq20180813_1}
\end{align}
The above formulation of the SCH problem can be rewritten in the following mixed formulation by substituting in the so-called chemical potential $w:= -\epsilon \Delta u + \frac{1}{\epsilon} f(u)$:
\begin{align}\label{Msch1:mIto}
du = \Delta w \, dt + \delta g(u)\,d W_t & \qquad \text{in} \ \D_T, \\
\label{Msch2:mIto}
w = -\epsilon \Delta u + \frac{1}{\epsilon} f(u) & \qquad \text{in} \ \D_T,\\
\label{Msch3:mIto}
\frac{\partial u}{\partial n} = \frac{\partial w}{\partial n} = 0 & \qquad \text{on} \ \partial \D_T, \\
\label{Msch4:mIto}
u = u_0 & \qquad \text{on} \ \D \times \{ 0 \}, 
\end{align}
which will be used to develop the fully discrete finite element scheme in this paper.

The deterministic Cahn-Hilliard equation was originally introduced in \cite{Cahn_Hilliard58} to describe phase separation and coarsening processes in a melted alloy. It was proved that the chemical potential approaches the Hele-Shaw problem as the interactive length $\epsilon$ decreases to $0$ \cite{Chen96,ABC:1994,P1989,Stoth96}. Numerical justification for this approximation can be found in \cite{Du_Feng19,feng2016analysis,Feng_Prohl04,Feng_Prohl05,li2017error,wu2018analysis}. We refer to some other references \cite{condette2011spectral,Du_Nicolaides91,Elliott_French89,Eyre98,guan2014second,shen2010numerical,xu2016convex} about numerical approximation for the Cahn-Hilliard equation and the references therein. For stochastic cases, the Cahn-Hilliard-Cook equation with additive noise (with fixed $\epsilon$) was studied in \cite{chai2018conforming,LM2011-CHC,KLM2011-CHC,KLM2014-CHC,furihata2018strong,kossioris2012finite,qi2020error}. The well-posedness of the stochastic Cahn-Hilliard equation was discussed in \cite{da1996stochastic} for additive noise and in \cite{cardon2001cahn} for multiplicative noise. The stability and error estimates in the discrete $H^{-1}$-norm for the stochastic Cahn-Hilliard equation with gradient-type noise were derived in \cite{feng2020fully}. The convergence of the stochastic Cahn-Hilliard equation to the Hele-Shaw flow was considered in \cite{antonopoulou2021numerical,Antonopoulou2018}. The semigroup approach is used for the stochastic Cahn-Hilliard equation with additive noise in \cite{bavnas2023robust,cui2018numerical,cui2021strong,furihata2018strong}, and the references therein.

The goal of the paper is to design a numerical scheme for stochastic Cahn-Hilliard equations with functional-type noise, and hence to explore some numerical properties of the proposed scheme. Here the nonlinear term is not globally Lipschitz continuous and only satisfies a one-side Lipschitz continuity, which interacts with the diffusion term to add another layer of difficulty in designing numerical schemes. We are aiming to establish some stability results, higher moment bounds, and finally the convergence rates for the proposed numerical scheme.

The rest of the paper is organized as follows. In Section \ref{sec2}, the weak solution is defined and several H\"older continuity results are obtained. In Section \ref{sec3}, the scheme is designed, and several stability results and higher moment bounds are both derived for the proposed scheme. In Section \ref{sec4}, the strong convergence with discrete $H^{-1}$-norm is established for the scheme. In Section \ref{sec5}, numerical experiments are done to validate our theoretical results based on different initial conditions and diffusion terms.

\section{Preliminary}\label{sec2}
Throughout this paper, $C$ is used to denote a generic constant, and we use the standard Sobolev
notations in \cite{BS2008}. Additionally, $(\cdot\ ,\ \cdot)$ will denote the standard inner product of $L^2(\D)$, and $\E\left[\cdot\right]$ denotes the expectation operator on the filtered probability space $(\Omega,\F,\{\F_t : t\ge0\}, \P)$.

In this section, we first define the following weak formulation for problem \eqref{eq:SCH}--\eqref{eq:SCH:i}: 
seeking an $\F_{t}$-adapted and $H^1(\D)\times H^1(\D)$-valued process $(u(\cdot,t), w(\cdot,t))$ such that  
there hold $\P$-almost surely 
\begin{align}
(u(t),\phi)_{} &=  (u_0,\phi)_{} - \int_0^t (\nabla w(s), \nabla \phi)_{} \, ds \label{SCH1:mIto:w1}\\
&\quad + \delta \int_0^t (g(u) d W_s, \phi)_{} \qquad \forall \, \phi \in H^1(\D) \quad \forall \, t \in (0,T], \notag\\
(w(t), \varphi) & = \epsilon (\nabla u(t), \nabla \varphi) + \frac{1}{\epsilon} ( f(u(t)), \varphi )  \qquad \forall \, \varphi \in H^1(\D)  \quad \forall \, t \in (0,T]. \label{SCH1:mIto:w2}
\end{align}

Next, we derive several H\"older continuity results in time for $u$ with respect to the spatial $L^2$-norm and for $w$ with respect to the spatial $H^1$-seminorm. These results are used in the error analysis section because the time derivative of $u$ does not exist.

\begin{lemma}\label{lem:e3}
Let $u$ be the strong solution to problem \eqref{Msch1:mIto}--\eqref{Msch4:mIto}.
Then for any $s,t \in [0,T]$ with $t < s$, we have
\begin{align*}
\mathbb{E} \left [ \|  u(s) - u(t) \|_{L^2}^2 \right ] + \epsilon \mathbb{E} \left [ \int_t^s \| \Delta (u(\xi) - u(t)) \|_{L^2}^2 \, d \xi\right ] \leq C_1 (s-t),
\end{align*}
where
\begin{align*}
C_1 = C \sup_{t \leq \xi \leq s} \mathbb{E} \left [ \| \Delta u (\xi) \|^2_{L^2} \right] + C \sup_{t \leq \xi \leq s} \mathbb{E} \left [ \| u (\xi) \|^6_{H^1} \right].
\end{align*}
\end{lemma}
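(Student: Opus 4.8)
The plan is to estimate the increment $u(s)-u(t)$ directly from the mild/weak formulation of the equation, controlling the deterministic drift term via the $H^2$-regularity encoded in $C_1$ and the stochastic term via the It\^o isometry together with the Lipschitz and growth conditions \eqref{eq20180812_2}--\eqref{eq20180813_1} on $g$. Starting from the strong form \eqref{Msch1:mIto}, I would write
\begin{align*}
u(s) - u(t) = \int_t^s \Delta w(\xi)\,d\xi + \delta \int_t^s g(u(\xi))\,dW_\xi,
\end{align*}
and substitute $w = -\epsilon\Delta u + \frac{1}{\epsilon}f(u)$ to expose the $\Delta^2 u$ and $\Delta f(u)$ contributions. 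Taking $L^2$-norms, squaring, and applying expectation is the natural first move.

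**Key steps.**

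First I would split the increment into the drift part $I_1 := \int_t^s \Delta w(\xi)\,d\xi$ and the stochastic part $I_2 := \delta\int_t^s g(u(\xi))\,dW_\xi$, and use $\|a+b\|_{L^2}^2 \le 2\|a\|_{L^2}^2 + 2\|b\|_{L^2}^2$. For the drift term, I would apply the Cauchy--Schwarz inequality in time to get $\mathbb{E}\|I_1\|_{L^2}^2 \le (s-t)\,\mathbb{E}\int_t^s \|\Delta w(\xi)\|_{L^2}^2\,d\xi$, then bound $\|\Delta w\|_{L^2} \le \epsilon\|\Delta^2 u\|_{L^2} + \frac{1}{\epsilon}\|\Delta f(u)\|_{L^2}$; this is where the $H^2$ regularity $\sup_\xi \mathbb{E}\|\Delta u(\xi)\|_{L^2}^2$ and the $H^1$ bound $\sup_\xi \mathbb{E}\|u(\xi)\|_{H^1}^6$ appearing in $C_1$ get used, the latter to control $\Delta f(u)$ through the cubic nonlinearity $f(u) = u^3 - u$. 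For the stochastic term, the It\^o isometry gives $\mathbb{E}\|I_2\|_{L^2}^2 = \delta^2 \int_t^s \mathbb{E}\|g(u(\xi))\|_{L^2}^2\,d\xi$, and the growth condition \eqref{eq20180813_1} bounds this by $C(s-t)(1 + \sup_\xi \mathbb{E}\|u(\xi)\|_{L^2}^2)$, which is absorbed into $C_1$. The second term on the left-hand side, $\epsilon\,\mathbb{E}\int_t^s \|\Delta(u(\xi)-u(t))\|_{L^2}^2\,d\xi$, would be obtained either by testing the weak form with $\Delta(u(\xi)-u(t))$ and integrating in time or, more cleanly, by noting it is already controlled by $(s-t)$ times $\sup_\xi \mathbb{E}\|\Delta u(\xi)\|_{L^2}^2$ after expanding the square and using the triangle inequality.

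**Main obstacle.**

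The delicate point is controlling $\mathbb{E}\|\Delta f(u)\|_{L^2}^2$, since the cubic nonlinearity produces terms like $u^2\Delta u$ and $u|\nabla u|^2$ after differentiating $f(u) = u^3 - u$ twice. Bounding these in $L^2$ requires an interpolation/embedding argument: for instance $\|u^2\Delta u\|_{L^2} \lesssim \|u\|_{L^\infty}^2\|\Delta u\|_{L^2}$, and then $\|u\|_{L^\infty}$ in dimensions $d=2,3$ must be controlled by $\|u\|_{H^1}$ via Sobolev embedding (valid in $d=2$, borderline in $d=3$ where one needs $H^{3/2+}$). The sixth-moment term $\sup_\xi \mathbb{E}\|u(\xi)\|_{H^1}^6$ in $C_1$ is precisely what is needed to close these estimates after applying H\"older's inequality in the probability space to the products of $L^\infty$ (or $H^1$) norms. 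Verifying that the Sobolev embeddings and the interpolation powers line up to yield exactly the sixth power of the $H^1$-norm — rather than a higher or fractional moment — is the technical heart of the argument and the step I would scrutinize most carefully.
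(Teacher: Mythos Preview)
Your direct-splitting approach does not fit the regularity budget encoded in $C_1$. When you bound
\[
\mathbb{E}\|I_1\|_{L^2}^2 \le (s-t)\,\mathbb{E}\int_t^s\|\Delta w(\xi)\|_{L^2}^2\,d\xi
\]
and then expand $\Delta w = -\epsilon\Delta^2 u + \frac{1}{\epsilon}\Delta f(u)$, you are forced to control $\|\Delta^2 u\|_{L^2}$, i.e.\ $H^4$-regularity of $u$, whereas $C_1$ supplies only $\sup_\xi \mathbb{E}\|\Delta u(\xi)\|_{L^2}^2$. The nonlinear part has the same defect: $\|\Delta f(u)\|_{L^2}$ involves $u^2\Delta u$ and $u|\nabla u|^2$, and in $d=3$ you cannot close these with $\|u\|_{H^1}^6$ alone because $H^1\not\hookrightarrow L^\infty$---a point you flag as ``borderline'' but do not resolve. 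So your argument proves at best an estimate with a constant depending on $\sup_\xi\mathbb{E}\|u(\xi)\|_{H^4}^2$, not the stated $C_1$.

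The paper avoids both issues by applying It\^o's formula to $\psi(u(s)) = \|u(s)-u(t)\|_{L^2}^2$. The drift contribution is then $2\int_t^s (u(\xi)-u(t),\,\Delta w(\xi))\,d\xi$, and two integrations by parts shift the derivatives onto $u(\xi)-u(t)$: this produces the dissipation $-2\epsilon\int_t^s\|\Delta(u(\xi)-u(t))\|_{L^2}^2\,d\xi$ (which gives the second term on the left for free, rather than by your triangle-inequality argument) together with cross terms $(\Delta(u(\xi)-u(t)),\,\Delta u(t))$ and $(\Delta(u(\xi)-u(t)),\,\frac{1}{\epsilon}f(u(\xi)))$. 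The first needs only $\|\Delta u(t)\|_{L^2}$; the second needs only $\|f(u(\xi))\|_{L^2}^2 \le C\|u(\xi)\|_{L^6}^6 \le C\|u(\xi)\|_{H^1}^6$ via $H^1\hookrightarrow L^6$ in $d\le 3$---no $L^\infty$ bound and no second derivatives of $f(u)$ are required. These are exactly the two quantities that make up $C_1$. The missing idea in your plan is this energy-type use of It\^o's formula, which trades the unattainable $\|\Delta w\|_{L^2}$ for the pairing $(\Delta(u-u(t)),\,w)$ and thereby drops two orders of required regularity.
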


\begin{proof}
Consider the functional $\psi$ defined as:
$$
\psi(u(s)) = \| u(s) - u(t) \|_{L^2}^2.
$$

The first two Gateaux derivatives are:
\begin{align*}
D\psi(u(s)) ( \nu_1 ) 
&= 2(  u(s) -  u(t) ,  \nu_1(s) ) ,\\
D\psi(u(s)) (\nu_1, \nu_2 ) 
&= 2 (  \nu_1(s),  \nu_2(s) ).
\end{align*}

Applying It\^o's formula to $\psi(u(s))$ yields
\begin{align*}
\|  u(s) -  u(t) \|_{L^2}^2 &= 2\int_t^s (  u(\xi) - u(t),  -\Delta(\epsilon \Delta u (\xi) - \frac{1}{\epsilon} f (u(\xi))))\, d \xi
\\
&\quad+ \delta^2 \int_t^s ( g(u(\xi)),  g(u(\xi))) \, d \xi
\\
&\quad+2\int_t^s ( u(\xi) - u(t), \delta g(u(\xi)) ) \, dW_{\xi}.
\end{align*}

Now after using integration by parts twice on the first integral, we have
\begin{align*}
\|u(s) - u(t) \|_{L^2}^2 &= 2\int_t^s ( \Delta ( u(\xi) - u(t)), -\epsilon \Delta ( u(\xi) - u(t) ) ) \, d \xi
\\
&\quad+2\int_t^s (  \Delta ( u(\xi) - u(t)), - \epsilon  \Delta u(t) ) \, d \xi
\\
&\quad+2\int_t^s ( \Delta ( u(\xi) - u(t)), \frac{1}{\epsilon}  f(u(\xi)) ) \, d \xi
\\
&\quad+ \delta^2 \int_t^s ( g(u(\xi)),  g(u(\xi))) \, d \xi
\\
&\quad+2\int_t^s ( (u(\xi) - u(t)),  \delta g(u(\xi)) ) \, dW_{\xi}.
\end{align*}

For the nonlinear term, by using the embedding theorem, we have
\begin{align*}
&\mathbb{E} \left [ \int_t^s \| f(u(\xi)) \|_{L^2}^2 \, d \xi \right ] \\
&\qquad = \mathbb{E} \left [ \int_t^s \int_{\mathcal{D}} u^6(\xi) - 2u^4(\xi) + u^2(\xi) \, dx \, d \xi \right ]
\\
&\qquad \leq C \sup\limits_{t \leq \xi \leq s}  \mathbb{E} \left [ \| u(\xi) \|^6_{H^1} \right ](s-t).
\end{align*}

Then upon taking the expectation on both sides as well as applying the Cauchy-Schwarz inequality, the Young's inequality, and the Gronwall's inequality, we have
\begin{align*}
&\mathbb{E} \left [ \| u(s) - u(t) \|_{L^2}^2 \right ] + \epsilon \mathbb{E} \left [ \int_t^s \| \Delta (u(\xi) - u(t)) \|_{L^2}^2 \, d \xi\right ]\\
\leq& C \sup_{t \leq \xi \leq s} \mathbb{E} \left [ \| \Delta u (\xi) \|^2_{L^2} \right] ( s - t )
+ C \sup_{t \leq \xi \leq s} \mathbb{E} \left [ \| u (\xi) \|^6_{H^1} \right] ( s - t ).
\end{align*}

The conclusion is proved.
\end{proof}

\begin{lemma}\label{20220808_1}
For any $s,t \in [0,T]$ with $t < s$, the chemical potential $w$ satisfies
\begin{align*}
\Eb{\|\nabla w(s)-\nabla w(t)\|_{L^2(\D)}^2} \leq C_2(s-t),
\end{align*}
where
\begin{align*}
C_2 = C\sup_{t\leq\zeta\leq s}\Eb{\| u(\zeta)\|_{H^6}^6}.
\end{align*}
\end{lemma}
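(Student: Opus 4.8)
The plan is to mirror the proof of Lemma~\ref{lem:e3}, but to apply It\^o's formula to the chemical potential $w$ rather than to $u$. The first step is to derive the stochastic evolution equation for $w$. Differentiating the defining relation \eqref{Msch2:mIto} and inserting \eqref{Msch1:mIto}, the It\^o chain rule applied to the composition $f(u)$ (which produces the correction $\tfrac{\delta^2}{2}f''(u)g(u)^2\,dt$) yields an equation of the form
\begin{equation*}
dw = \Bigl[-\epsilon\Delta^2 w + \tfrac1\epsilon f'(u)\Delta w + \tfrac{\delta^2}{2\epsilon}f''(u)g(u)^2\Bigr]dt + \delta\Bigl[-\epsilon\Delta(g(u)) + \tfrac1\epsilon f'(u)g(u)\Bigr]dW_t =: b\,dt + \sigma\,dW_t .
\end{equation*}
I would then apply It\^o's formula to the functional $\psi(w(s)) = \|\nabla(w(s)-w(t))\|_{L^2}^2$, holding $w(t)$ frozen, exactly as $\psi(u(s))=\|u(s)-u(t)\|_{L^2}^2$ was used above. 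Writing $\phi := w(\xi)-w(t)$, this gives
\begin{equation*}
\|\nabla(w(s)-w(t))\|_{L^2}^2 = 2\int_t^s (\nabla\phi,\nabla b)\,d\xi + \int_t^s \|\nabla\sigma\|_{L^2}^2\,d\xi + 2\int_t^s (\nabla\phi,\nabla\sigma)\,dW_\xi ,
\end{equation*}
and taking expectation kills the martingale term.

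The leading drift term is the main obstacle: the contribution $-2\epsilon(\nabla\phi,\nabla\Delta^2 w)$ would require control of seventh-order spatial derivatives of $u$ if estimated naively, and the whole point of using It\^o on the \emph{quadratic} functional is that its dissipative structure lowers this demand. Integrating by parts (using the homogeneous Neumann conditions \eqref{Msch3:mIto} and the attendant higher-order compatibility, so that all boundary integrals vanish) turns this term into $2\epsilon(\Delta\phi,\Delta^2 w)$. Splitting $w=\phi+w(t)$ and using $(\Delta\phi,\Delta^2\phi)=-\|\nabla\Delta\phi\|_{L^2}^2$ produces the coercive term $-2\epsilon\|\nabla\Delta\phi\|_{L^2}^2\le 0$, which I move to the left-hand side, together with the cross term $2\epsilon(\Delta\phi,\Delta^2 w(t))$. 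Since $\int_\D\Delta\phi = \int_{\partial\D}\partial_n\phi = 0$, the Poincar\'e inequality gives $\|\Delta\phi\|_{L^2}\le C\|\nabla\Delta\phi\|_{L^2}$, so Young's inequality absorbs the cross term into the coercive term up to a remainder controlled by $\|\Delta^2 w(t)\|_{L^2}^2$. It is precisely this factor, through $\Delta^2 w(t) = -\epsilon\Delta^3 u(t)+\tfrac1\epsilon\Delta^2 f(u(t))$, that forces the $H^6$-regularity appearing in $C_2$.

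The remaining terms are lower order. For the drift terms $\tfrac1\epsilon f'(u)\Delta w$ and $\tfrac{\delta^2}{2\epsilon}f''(u)g(u)^2$ I would integrate by parts to move a derivative onto $\phi$, absorb the resulting $\|\Delta\phi\|_{L^2}^2$ into the coercive term, and bound the remaining factors by Young's inequality; for the diffusion correction $\|\nabla\sigma\|_{L^2}^2$ I would expand $\nabla\sigma$ by the chain rule. Here the growth condition \eqref{eq20180813_1}, the Lipschitz condition \eqref{eq20180812_2} (together with boundedness of the relevant derivatives of $g$ and the polynomial form of $f$ from \eqref{eq:F}), and the Sobolev embedding $H^2(\D)\hookrightarrow L^\infty(\D)$ valid for $d\le 3$, ensure that every nonlinear factor such as $f''(u)g(u)^2\sim u^3$ or $(3u^2-1)\nabla u$ is controlled by a spatial norm no higher than $H^3$ and a power no larger than $6$; all such contributions are therefore dominated by $\|u\|_{H^6}^6$, with the linear contribution of $\Delta^3 u$ handled via $\|u\|_{H^6}^2\le 1+\|u\|_{H^6}^6$. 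Taking the supremum over $\zeta\in[t,s]$, integrating in $\xi$ over $(t,s)$, and dropping the nonpositive coercive term finally yields
\begin{equation*}
\mathbb{E}\bigl[\|\nabla w(s)-\nabla w(t)\|_{L^2}^2\bigr] \le C(s-t)\sup_{t\le\zeta\le s}\mathbb{E}\bigl[\|u(\zeta)\|_{H^6}^6\bigr],
\end{equation*}
which is the claim. The essential difficulty, then, is not any single estimate but the interplay of the It\^o dissipation with the integrations by parts: only by keeping $\|\nabla\Delta\phi\|_{L^2}^2$ on the left can one trade the apparent loss of three derivatives in $\nabla\Delta^2 w$ down to the $H^6$-control that is actually available.
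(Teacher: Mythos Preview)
Your argument is correct and reaches the same conclusion, but the organization differs from the paper's. The paper does \emph{not} first derive a stochastic evolution equation for $w$; instead it regards $\|\nabla w(s)-\nabla w(t)\|_{L^2}^2$ directly as a functional of $u(s)$ (splitting it as $g_1+g_2$, the $\epsilon\nabla\Delta u$ and $\tfrac1\epsilon\nabla f(u)$ contributions), computes the first two Gateaux derivatives in $u$, and applies It\^o's formula using the $u$-equation with drift $G_1=-\Delta(\epsilon\Delta u-\tfrac1\epsilon f(u))$ and diffusion $G_2=\delta g(u)$. After listing all resulting integrals it bounds each one by Young's inequality and Sobolev embedding, noting that the $H^6$ requirement is forced by the term $\tfrac{2}{\epsilon^2}\int_t^s\!\int_{\mathcal D}9u^4\nabla u\cdot\nabla G_1\,dx\,d\xi$. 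Your route---first obtaining $dw=b\,dt+\sigma\,dW_t$ via the It\^o chain rule and then applying It\^o to the simple quadratic $\|\nabla\phi\|_{L^2}^2$---is a legitimate repackaging of the same computation (composition of It\^o's formula) that has the advantage of exposing the dissipative term $-2\epsilon\|\nabla\Delta\phi\|_{L^2}^2$ explicitly and using it to absorb the cross term via Poincar\'e. The paper, by contrast, does not isolate or exploit this coercivity: it simply bounds every term by the available $H^6$ control after at most one integration by parts, so your claim that keeping the coercive term is the \emph{only} way to avoid an $H^7$ demand is somewhat overstated---in the paper's formulation a single integration by parts on $(\nabla\Delta(u-u(t)),\nabla\Delta G_1)$ already lands on $(\Delta^2(u-u(t)),\Delta G_1)$, which needs only $H^6$. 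Both approaches require the same implicit smoothness of $g$ (since $\nabla\Delta g(u)$ appears either way), and both arrive at the stated bound.
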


\begin{proof}
First define $g(u(s)) = g_1(u(s)) + g_2(u(s))$ where
\begin{align*}
g_1(u(s)) &= \| \epsilon \nabla \Delta u(s) - \epsilon \nabla \Delta u(t) \|^2_{L^2},
\\
g_2(u(s)) &= \| \frac{1}{\epsilon} \nabla f(u(s)) - \frac{1}{\epsilon} \nabla f(u(t))\|^2_{L^2}.
\end{align*}
The first two Gateaux derivatives of $g_1$ are:
\begin{align*}
D g_1(u(s))(\nu_1) &= 2 \epsilon^2 \int_{\mathcal{D}} ( \nabla \Delta u(s) - \nabla \Delta u(t) ) \cdot \nabla \Delta \nu_1(s) \, dx,
\\
D g_1(u(s))(\nu_1, \nu_2) &= 2 \epsilon^2 \int_{\mathcal{D}} \nabla \Delta \nu_1 \cdot \nabla \Delta \nu_2 \, dx.
\end{align*}
The first two Gateaux derivatives of $g_2$ are:
\begin{align*}
D g_2(u(s))(\nu_1) &= \frac{2}{\epsilon^2} \int_{\mathcal{D}} [3u^2(s) \nabla u(s) - \nabla u(s) - \nabla f(u(t))]
\\
&\cdot [6u(s) \nu_1(s) \nabla u(s) + 3u^2(s) \nabla \nu_1(s) - \nabla \nu_1(s)] \, dx
\\
D g_2(u(s))(\nu_1, \nu_2) &= \frac{2}{\epsilon^2} \int_{\mathcal{D}} [3u^2(s) \nabla u(s) - \nabla u(s) - \nabla f(u(t))]
\\
&\cdot [6 \nu_2(s) \nu_1(s) \nabla u(s) + 6u(s) \nu_1(s) \nabla \nu_2(s) + 6u(s) \nu_2(s) \nabla \nu_1(s)] \, dx
\\
&+ \frac{2}{\epsilon^2} \int_{\mathcal{D}} [6 u(s) \nu_1(s) \nabla u(s) + 3u^2(s) \nabla \nu_1(s) - \nabla \nu_1(s)]
\\
&\cdot [3u^2(s) \nabla \nu_2(s) + 6 u(s) \nu_2(s) \nabla u(s) - \nabla \nu_2(s)] \, dx.
\end{align*}
Then applying the It\^{o}'s formula to $g(w(s)) = \| \nabla w(s) - \nabla w(t) \|^2_{L^2}$ yields
\begin{align*}
&\| \nabla w(s) - \nabla w(t) \|^2_{L^2} = 2 \epsilon^2 \int_t^s ( \nabla \Delta (u(\xi) - u(t)) , \nabla \Delta G_1(\xi) ) \, d \xi
\\
&\quad+ \epsilon^2 \int_t^s (\nabla \Delta G_2(\xi), \nabla \Delta G_2(\xi)) \, d \xi + 
2 \epsilon^2 \int_t^s ( \nabla \Delta (u(\xi) - u(t)) , \nabla \Delta G_2(\xi) ) \, dW_\xi
\\
&\quad+ \frac{2}{\epsilon^2}\int_t^s \int_{\mathcal{D}} [3u^2(\xi) \nabla u(\xi) - \nabla u(\xi) - \nabla f(u(t))]
\\
&\qquad \qquad \quad \cdot [6u(\xi) G_1(\xi) \nabla u(\xi) + 3u^2(\xi) \nabla G_1(\xi) - \nabla G_1(\xi)] \, dx \, d \xi
\\
&\quad+ \frac{2}{\epsilon^2} \int_t^s \int_{\mathcal{D}} [3u^2(\xi) \nabla u(\xi) - \nabla u(\xi) - \nabla f(u(t))]
\\
&\qquad \qquad \quad \cdot [6u(\xi) G_2(\xi) \nabla u(\xi) + 3u^2(\xi) \nabla G_2(\xi) - \nabla G_2(\xi)] \, dx \, d W_\xi
\\
&\quad+ \frac{\delta^2}{\epsilon^2} \int_t^s \int_{\mathcal{D}} [3u^2(\xi) \nabla u(\xi) - \nabla u(\xi) - \nabla f(u(t))]
\\
&\qquad \qquad \quad \cdot [6 G_2^2(\xi) \nabla u(\xi) + 6u(\xi) G_2(\xi) \nabla G_2(\xi) + 6u(\xi) G_2(\xi) \nabla G_2(\xi)] \, dx \, d \xi
\\
&\quad+ \frac{\delta^2}{\epsilon^2} \int_t^s \int_{\mathcal{D}} [6 u(\xi) G_2(\xi) \nabla u(\xi) + 3u^2(\xi) \nabla G_2(\xi) - \nabla G_2(\xi)]
\\
&\qquad \qquad \quad \cdot [3u^2(\xi) \nabla G_2(\xi) + 6 u(\xi) G_2(\xi) \nabla u(\xi) - \nabla G_2(\xi)] \, dx \, d \xi,
\end{align*}
where
\begin{align*}
G_1(\xi) &= -\Delta \left ( \epsilon \Delta u(\xi) - \frac{1}{\epsilon} f(u(\xi)) \right ),
\\
G_2(\xi) &= \delta g(u(\xi)).
\end{align*}

Taking the expectation on both sides of the above equation, and using the Young's inequality and the embedding theorem, we get
\begin{align}\label{eq20230530_1}
\|\nabla w(s) -\nabla w(t)\|_{L^2}^2
\le C\sup_{t\leq\zeta\leq s}\Eb{\| u(\zeta)\|_{H^6}^6}(s-t),
\end{align}
where the term $H^6$ norm is from the term $\frac{2}{\epsilon^2}\int_t^s \int_{\mathcal{D}} 9u^4(\xi) \nabla u(\xi)\nabla G_1(\xi)dxd\xi$.

\end{proof}

\section{Stability and Convergence}\label{sec3}
Let $t_n = n \tau$ ($n = 0, 1, ..., N$) be a uniform partition of $[0, T]$ and $\mathcal{T}_h$ be a quasi-uniform triangulation of $\D$. Define $V_h$ to be the finite element space given by 
\begin{equation}
V_h := \{ v_h \in H^1(\D): v_h\mid_K \in \mathcal{P}_1(K) \quad \forall \, K \in \mathcal{T}_h \},  
\end{equation}
where $\mathcal{P}_1(K)$ denotes the space of linear polynomials on the element $K$. Define $\mathring{V}_h$ be the subspace of  $V_h$ with zero mean, i.e., 
\begin{align} 
\mathring{V}_h := \bigl\{ v_h \in V_h:\, (v_h, 1)= 0 \bigr\}. 
\end{align}

The fully discrete mixed finite element methods for \eqref{Msch1:mIto}-\eqref{Msch4:mIto} is to seek $\F_{t_n}$-adapted and $V_h \times V_h$-valued process $\{ (u_h^n, w_h^n) \}$ ($n = 1, \dots, N$) such 
that $\P$-almost surely
\begin{align}\label{eq20211119_1}
(u^n_h - u^{n-1}_h, \eta_h) + \tau (\nabla w^n_h, \nabla \eta_h) &= \delta (g(u^{n-1}_h), \eta_h) \Delta W^n \quad &\forall \eta_h \in V_h,
\\
\epsilon (\nabla u^n_h, \nabla v_h) + \frac{1}{\epsilon} (I_h f^n, v_h) &= (w^n_h, v_h) \quad &\forall v_h \in V_h,\label{eq20211119_2}
\end{align}
where $\Delta W^{n}:=W^{n}-W^{n-1}$ satisfies the normal distribution $\mathcal{N}(0,1)$, $f^{n} := (u^{n}_h)^3-u^{n}_h$, and $I_h : C(\bar{\D}) \rightarrow V_h$ is the standard nodal value interpolation operator defined by
\[
I_hv := \sum_{i = 1}^{N_h} v(a_i) \psi_i.
\]
Here $N_h$ denotes the number of vertices of $\mathcal{T}_h$ and $\psi_i$ denotes the nodal basis for $V_h$ corresponding to the vertex $a_i$. The initial conditions are chosen to be $u^0_h = P_h u_0$ and $w^0_h = P_h w_0$, where $P_h : L^2( \D ) \rightarrow V_h$ is the $L^2$-projection operator defined by
\[
(P_hv, v_h) = (v, v_h) \qquad \forall v_h \in V_h.
\]

The following properties of the $L^2$-projection can be found in \cite{BS2008,ciarlet2002finite}:
\begin{align}
\label{Ph1}
&\|v - P_h v \|_{L^2} + h \| \nabla (v - P_h v) \|_{L^2} 
\leq C h^{\min\{2,s\}} \|v\|_{H^s},\\ 
\label{Ph2}
&\|v - P_h v\|_{L^\infty} \leq C h^{2-\frac{d}{2}} \|v\|_{H^2}. 
\end{align}
for all $v \in H^s(\D)$ such that $s>\frac32$.
Furthermore, the inverse discrete 
Laplace operator $\Delta_h^{-1}: \mathring{V}_h \rightarrow \mathring{V}_h$ is defined by
\begin{equation}\label{eq:distInvLap}
\bigl( \nabla(-\Delta_h^{-1}\zeta_h),\nabla v_h \bigr) = \bigl( \zeta_h, v_h \bigr) \qquad \forall\, v_h\in \mathring{V}_h.
\end{equation} 
Then for $\zeta_h, \Phi_h \in \mathring{V}_h$ the discrete $H^{-1}$ inner product is defined as
\begin{equation}\label{eq3.3}
(\zeta_h, \Phi_h)_{-1, h} = \bigl(\nabla(-\Delta_h^{-1}\zeta_h),\nabla(-\Delta_h^{-1}\Phi_h) \bigr)
=\bigl(\zeta_h,-\Delta_h^{-1}\Phi_h\bigr)=\bigl(-\Delta_h^{-1}\zeta_h,\Phi_h\bigr). 
\end{equation}
It is easy to show that the discrete $H^{-1}$-norm satisfies the following two properties
\begin{alignat}{2}
|(\zeta_h, \Phi_h)| &\leq \| \zeta_h \|_{-1, h} |\Phi_h|_{H^1}&&\qquad\forall\zeta_h, \Phi_h \in \mathring{V}_h,
\\
\|\zeta_h \|_{-1, h} &\leq C \| \zeta_h \|_{L^2}&&\qquad\forall\zeta_h\in \mathring{V}_h.
\end{alignat}
Lastly, define the discrete Laplace operator $\Delta_h : V_h \rightarrow V_h$ by
\begin{equation}\label{eq20230613_2}
(\Delta_h \zeta_h, v_h) = -(\nabla \zeta_h, \nabla v_h) \qquad \forall v_h \in V_h.
\end{equation}


\begin{theorem}\label{thm:stab:2}
Let $u^n_h\ (n=1,2,\cdots N)$ be the solution of \eqref{eq20211119_1} and \eqref{eq20211119_2}, then there exists a positive constant $C$ independent of $h$ and $\tau$ that 
\begin{align}\label{stab:LinfyL2}
\max_{1 \leq n \leq N} \E\left[\|u^n_h\|^2_{L^2}\right] + \sum_{n=1}^N\E\left[\|u^n_h - u^{n-1}_h\|^2_{L^2}\right] + \tau\sum_{n=1}^N\E\left[\|\Delta_h u^n_h\|^2_{L^2}\right]\le C.
\end{align}
\end{theorem}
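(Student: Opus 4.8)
The plan is to derive a one-step energy inequality by testing the scheme against the discrete solution itself and then to close the estimate by a discrete Gronwall argument. First I would take $\eta_h = u^n_h$ in \eqref{eq20211119_1} and use the polarization identity $(u^n_h - u^{n-1}_h, u^n_h) = \frac12(\|u^n_h\|_{L^2}^2 - \|u^{n-1}_h\|_{L^2}^2 + \|u^n_h - u^{n-1}_h\|_{L^2}^2)$, which already produces the first two quantities on the left of \eqref{stab:LinfyL2}. The remaining deterministic term $\tau(\nabla w^n_h, \nabla u^n_h)$ must be rewritten in terms of $u^n_h$ alone: by the definition \eqref{eq20230613_2} of $\Delta_h$ one has $(\nabla w^n_h, \nabla u^n_h) = -(w^n_h, \Delta_h u^n_h)$, and taking $v_h = -\Delta_h u^n_h$ in \eqref{eq20211119_2} gives $(\nabla w^n_h, \nabla u^n_h) = \epsilon\|\Delta_h u^n_h\|_{L^2}^2 + \frac1\epsilon(\nabla I_h f^n, \nabla u^n_h)$. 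This supplies the coercive term $\tau\epsilon\|\Delta_h u^n_h\|_{L^2}^2$, i.e. the third quantity in \eqref{stab:LinfyL2}, at the price of the nonlinear contribution $\frac\tau\epsilon(\nabla I_h f^n, \nabla u^n_h)$.

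The heart of the argument, and the step I expect to be hardest, is a lower bound on this nonlinear term. Writing $U_i = u^n_h(a_i)$ and $S_{ij} = (\nabla\psi_i, \nabla\psi_j)$, the nodal interpolation renders the term purely algebraic: $(\nabla I_h f^n, \nabla u^n_h) = \sum_{i,j} S_{ij} f(U_i) U_j$ with $f(U_i) = U_i^3 - U_i$. Exploiting the symmetry of $S$ and its vanishing row sums, which follow from $\sum_i \psi_i \equiv 1$, this rewrites as $-\frac12\sum_{i,j} S_{ij}(f(U_i)-f(U_j))(U_i - U_j)$. The factorization $f(U_i)-f(U_j) = (U_i-U_j)(U_i^2 + U_iU_j + U_j^2 - 1)$ shows $(f(U_i)-f(U_j))(U_i-U_j) \ge -(U_i-U_j)^2$, which is the one-sided Lipschitz property of $f$ at the nodal level. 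Provided the triangulation yields nonpositive off-diagonal stiffness entries $S_{ij}\le 0$ for $i\ne j$, multiplying this pointwise inequality by $-\frac12 S_{ij}\ge0$ and summing gives $(\nabla I_h f^n, \nabla u^n_h) \ge \frac12\sum_{i,j}S_{ij}(U_i-U_j)^2 = -\|\nabla u^n_h\|_{L^2}^2$. This is precisely the special structure of the matrix assembled by the nonlinear term, and its dependence on the mesh geometry is the crux of the whole estimate. The residual $\|\nabla u^n_h\|_{L^2}^2$ is then absorbed by the coercive term via $\|\nabla u^n_h\|_{L^2}^2 = -(\Delta_h u^n_h, u^n_h) \le \frac{\epsilon^2}{2}\|\Delta_h u^n_h\|_{L^2}^2 + \frac{1}{2\epsilon^2}\|u^n_h\|_{L^2}^2$, so that half of $\tau\epsilon\|\Delta_h u^n_h\|_{L^2}^2$ survives and only a lower-order multiple of $\|u^n_h\|_{L^2}^2$ is left for the Gronwall step.

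It then remains to treat the stochastic forcing $\delta(g(u^{n-1}_h), u^n_h)\Delta W^n$ after taking expectations. Since $u^n_h$ depends on $\Delta W^n$, this is not a martingale increment, so I would split $u^n_h = u^{n-1}_h + (u^n_h - u^{n-1}_h)$: the contribution from $u^{n-1}_h$ vanishes because $(g(u^{n-1}_h),u^{n-1}_h)$ is $\F_{t_{n-1}}$-measurable while $\Delta W^n$ is mean zero and independent of $\F_{t_{n-1}}$, and the contribution from the increment is controlled by Cauchy--Schwarz and Young's inequality as $\frac14\E[\|u^n_h-u^{n-1}_h\|_{L^2}^2] + \delta^2\E[\|g(u^{n-1}_h)\|_{L^2}^2 (\Delta W^n)^2]$. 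The first piece is absorbed into the $\frac12\E[\|u^n_h-u^{n-1}_h\|_{L^2}^2]$ from the polarization identity, so a positive multiple of the increment norm is retained in the final bound; for the second piece I use the independence of $\Delta W^n$, the identity $\E[(\Delta W^n)^2]=\tau$, and the growth condition \eqref{eq20180813_1} to obtain $C\tau(1 + \E[\|u^{n-1}_h\|_{L^2}^2])$.

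Finally, summing the resulting one-step inequality over $n$, bounding $\E[\|u^0_h\|_{L^2}^2] = \E[\|P_h u_0\|_{L^2}^2] \le \E[\|u_0\|_{L^2}^2]$ by $L^2$-stability of $P_h$, and noting $\sum_n \tau \le T$, I arrive at $\E[\|u^m_h\|_{L^2}^2] + \sum_{n=1}^m\E[\|u^n_h-u^{n-1}_h\|_{L^2}^2] + \tau\sum_{n=1}^m\E[\|\Delta_h u^n_h\|_{L^2}^2] \le C + C\tau\sum_{n=0}^m\E[\|u^n_h\|_{L^2}^2]$ uniformly in $m$. Moving the endpoint term $C\tau\E[\|u^m_h\|_{L^2}^2]$ to the left for $\tau$ small and applying the discrete Gronwall inequality then yields \eqref{stab:LinfyL2}; taking the maximum over $1\le m\le N$ gives the first term in the desired form. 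I anticipate the only genuine difficulty to be the matrix/sign analysis of the second paragraph, the stochastic and Gronwall parts being standard once the coercive term is secured.
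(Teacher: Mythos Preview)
Your proposal is correct and follows the paper's proof essentially step for step: the same test functions $\eta_h=u^n_h$ and $v_h=-\Delta_h u^n_h$, the same polarization identity, the same absorption of $\|\nabla u^n_h\|_{L^2}^2$ into $\|\Delta_h u^n_h\|_{L^2}^2$ and $\|u^n_h\|_{L^2}^2$, the same splitting $u^n_h=u^{n-1}_h+(u^n_h-u^{n-1}_h)$ for the noise term, and the same discrete Gronwall closure.

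The only point of divergence is the algebra behind the key inequality $-\tfrac{\tau}{\epsilon}(\nabla I_h f^n,\nabla u^n_h)\le \tfrac{\tau}{\epsilon}\|\nabla u^n_h\|_{L^2}^2$. The paper isolates the cubic part, writes $b_{ij}=u_i^3u_j$, applies Young's inequality $u_i^3u_j\le\tfrac34 u_i^4+\tfrac14 u_j^4$, and then invokes diagonal dominance of the stiffness matrix to conclude $\sum_{i,j}b_{ij}S_{ij}\ge 0$. You instead keep $f$ intact, use the graph-Laplacian identity $\sum_{i,j}S_{ij}f(U_i)U_j=-\tfrac12\sum_{i,j}S_{ij}(f(U_i)-f(U_j))(U_i-U_j)$ (valid because the row sums of $S$ vanish), and apply the one-sided Lipschitz bound $(f(a)-f(b))(a-b)\ge -(a-b)^2$ pointwise. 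Both routes require exactly the same geometric hypothesis---nonpositive off-diagonal stiffness entries, equivalently diagonal dominance given $\sum_j S_{ij}=0$---and produce the identical bound. Your version has the advantage of making the role of the one-sided Lipschitz structure of $f$ explicit and would extend verbatim to any nonlinearity with $f'\ge -1$; the paper's version is slightly more hands-on but specific to the cubic.
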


\begin{proof}
Testing \eqref{eq20211119_1} and \eqref{eq20211119_2} by $\eta_h = u^n_h$ and $v_h = -\Delta_h u^n_h$, respectively, we get
\begin{align}
(u^n_h - u^{n-1}_h, u^n_h) + \tau (\nabla w^n_h, \nabla u^n_h) &= \delta (g(u^{n-1}_h), u^n_h) \Delta W^n,\label{eq20211012_1}\\
\epsilon (\nabla u^n_h, - \nabla \Delta_h u^n_h) + \frac{1}{\epsilon} (I_h f^n, -\Delta_h u^n_h) &= (w^n_h, -\Delta_h u^n_h).\label{eq20211012_2}
\end{align}
Thus,
\begin{align}\label{eq20211012_3}
\frac{1}{2}\|u^n_h\|^2_{L^2} - \frac{1}{2}\|u^{n-1}_h\|^2_{L^2} + \frac{1}{2}\|u^n_h - u^{n-1}_h\|^2_{L^2} + \tau& (\nabla w^n_h, \nabla u^n_h) \\
&= \delta (g(u^{n-1}_h), u^n_h) \Delta W^n,\notag
\\
\epsilon \|\Delta_h u^n_h\|^2_{L^2} + \frac{1}{\epsilon} (\nabla I_h f^n, \nabla u^n_h) &= (\nabla w^n_h, \nabla u^n_h).\label{eq20211012_4}
\end{align}

Multiplying the second equation by $\tau$ and substituting into the first equation, and then rearranging, we get 
\begin{align}\label{eq20211119_8}
&\frac{1}{2}\|u^n_h\|^2_{L^2} - \frac{1}{2}\|u^{n-1}_h\|^2_{L^2} + \frac{1}{2}\|u^n_h - u^{n-1}_h\|^2_{L^2} + \epsilon \tau \|\Delta_h u^n_h\|^2_{L^2}\\
&\qquad=\delta (g(u^{n-1}_h), u^n_h) \Delta W^n - \frac{\tau}{\epsilon}(\nabla I_h f^n, \nabla u^n_h).\notag
\end{align}

One key role of the interpolation operator is to bound the nonlinear term. Denote $u_{i}=u_{h}^n\left(a_{i}\right)$, and then
\begin{align}
- \frac{\tau}{\epsilon}(\nabla I_h f^n, \nabla u^n_h)=& \frac{\tau}{\epsilon}\|\nabla u^n_h\|^2_{L^2} - \frac{\tau}{\epsilon} ( \nabla \sum_{i = 1}^{N_h} u_i^3 \varphi_i, \nabla \sum_{i = j}^{N_h} u_j \varphi_j)\notag
\\
=& \frac{\tau}{\epsilon} \|\nabla u^n_h\|^2_{L^2} - \frac{\tau}{\epsilon} \sum_{i, j = 1}^{N_h} b_{ij} (\nabla \varphi_i, \nabla \varphi_j),\notag
\end{align}
where $b_{ij}=u_i^3 u_j$. Notice when $i \neq j$, we have
\begin{equation}
 b_{i j} \leq \frac{3}{4} u_{i}^{4}+\frac{1}{4} u_{j}^{4}.
\end{equation}
The stiffness matrix is diagonally dominant, and then
\begin{align}
&-\frac{\tau}{\epsilon} \sum_{i, j=1}^{N_{h}} b_{i j}\left(\nabla \varphi_{i}, \nabla \varphi_{j}\right) \\
&\qquad\leq-\frac{\tau}{\epsilon} \sum_{k=1}^{N_{h}} b_{k k}\bigg[\left(\nabla \varphi_{k}, \nabla \varphi_{k}\right)-\frac{3}{4} \sum_{i=1 \atop i \neq k}^{N_{h}}\mid\left(\nabla \varphi_{i}, \nabla \varphi_{k}\right)\mid-\frac{1}{4} \sum_{j=1 \atop j \neq k}^{N_{h}}\mid\left(\nabla \varphi_{k}, \nabla \varphi_{j}\right)\mid\bigg]\notag \\
&\qquad\leq-\frac{\tau}{\epsilon} \sum_{k=1}^{N_{h}} b_{k k}\bigg[\left(\nabla \varphi_{k}, \nabla \varphi_{k}\right)-\sum_{i=1 \atop i \neq k}^{N_{h}}\mid\left(\nabla \varphi_{i}, \nabla \varphi_{k}\right)\mid\bigg]\notag\\
&\qquad\leq 0.\notag
\end{align}
Based on this inequality, we have
\begin{align}\label{eq20230401_9}
- \frac{\tau}{\epsilon}(\nabla I_h f^n, \nabla u^n_h) \leq& \frac{\tau}{\epsilon}\left\|\nabla u_{h}^n\right\|_{L^{2}}^{2},\\
\leq& \frac{C \tau}{\epsilon^3} \|u^n_h\|^2_{L^2} + \frac{\epsilon \tau}{2} \|\Delta_h u^n_h\|^2_{L^2}.\notag
\end{align}

For the diffusion term, apply the growth property of $g(\cdot)$ and the martingale property to obtain 
\begin{align}\label{eq20211119_5}
&\delta \mathbb{E}\left[(g(u^{n-1}_h), u^n_h) \Delta W^n\right] \\
&\qquad= \delta \mathbb{E}\left[(g(u^{n-1}_h), u^n_h - u^{n - 1}_h) \Delta W^n\right]\notag\\
&\qquad\leq \frac{1}{4} \mathbb{E}\left[\|u^n_h - u^{n-1}_h\|^2_{L^2}\right] +  \delta^2\tau \mathbb{E}\left[\|g(u^{n-1}_h)\|^2_{L^2}\right]\notag\\
&\qquad\leq \frac{1}{4} \mathbb{E} \left[\|u^n_h - u^{n-1}_h\|^2_{L^2}\right] + C\delta^2\tau + C\delta^2 \tau \E\left[\|u^{n-1}_h\|^2_{L^2}\right].\notag
\end{align}

Taking the expectation and taking the summation over $n$ from $1$ to $\ell$ in \eqref{eq20211119_8}, we have
\begin{align}\label{eq20211119_6}
&\E\left[\frac12\|u^\ell_h\|^2_{L^2}\right] + \frac14\sum_{n=1}^\ell\E\left[\|u^n_h - u^{n-1}_h\|^2_{L^2}\right] + \frac{\epsilon\tau}{2}\sum_{n=1}^\ell\E\left[\|\Delta_h u^n_h\|^2_{L^2}\right]\\
&\qquad\le \frac{C \tau}{\epsilon^3}\sum_{n=1}^\ell\E\left[\|u^n_h\|^2_{L^2}\right] + C \delta^2 + C\delta^2 \tau \sum_{n=1}^\ell\E\left[\|u^{n-1}_h\|^2_{L^2}\right] + \E\left[\frac12\|u^0_h\|^2_{L^2}\right]\notag.
\end{align}
By the discrete Gronwall inequality, we obtain
\begin{align}\label{eq20211119_9}
\E\left[\frac12\|u^\ell_h\|^2_{L^2}\right] + \frac14\sum_{n=1}^\ell\E\left[\|u^n_h - u^{n-1}_h\|^2_{L^2}\right] + \frac{\epsilon\tau}{2}\sum_{n=1}^\ell\E\left[\|\Delta_h u^n_h\|^2_{L^2}\right]\le C,
\end{align}
where $C$ depends on $\delta$ and $\epsilon$. 
Finally, the estimate \eqref{stab:LinfyL2} follows from \eqref{eq20211119_9}. 
\end{proof}

By equation \eqref{eq20230613_2}, the Cauchy-Schwartz inequality, and Theorem \ref{thm:stab:2}, we could directly obtain the following Corollary.
\begin{corollary}\label{coro1}
Let $u^n_h\ (n=1,2,\cdots N)$ be the solution of \eqref{eq20211119_1} and \eqref{eq20211119_2}, then there holds 
\begin{align}\label{eq20230613_1}
\tau\sum_{n=1}^N\E\left[\|\nabla u^n_h\|^2_{L^2}\right]\le C.
\end{align}
\end{corollary}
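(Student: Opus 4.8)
The plan is to control $\|\nabla u^n_h\|_{L^2}^2$ pointwise in $n$ by the two quantities that Theorem \ref{thm:stab:2} already bounds, namely $\|u^n_h\|_{L^2}$ and $\|\Delta_h u^n_h\|_{L^2}$, and then to sum in time. The key observation is that the definition \eqref{eq20230613_2} of the discrete Laplace operator, taken with the test function $v_h = u^n_h$, converts the gradient seminorm into an inner product pairing:
$$\|\nabla u^n_h\|_{L^2}^2 = (\nabla u^n_h, \nabla u^n_h) = -(\Delta_h u^n_h, u^n_h).$$

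From here I would apply the Cauchy-Schwarz inequality in $L^2(\D)$ to get the pointwise bound $\|\nabla u^n_h\|_{L^2}^2 \le \|\Delta_h u^n_h\|_{L^2}\,\|u^n_h\|_{L^2}$. Taking expectations, multiplying by $\tau$, and summing over $n$ from $1$ to $N$, a discrete Cauchy-Schwarz inequality in the index $n$ (together with the one over $\Omega$) then yields
$$\tau\sum_{n=1}^N \E\left[\|\nabla u^n_h\|_{L^2}^2\right] \le \left(\tau\sum_{n=1}^N \E\left[\|\Delta_h u^n_h\|_{L^2}^2\right]\right)^{1/2}\left(\tau\sum_{n=1}^N \E\left[\|u^n_h\|_{L^2}^2\right]\right)^{1/2}.$$
The first factor on the right is bounded by $C$ directly from \eqref{stab:LinfyL2}. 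For the second factor I would use $\tau\sum_{n=1}^N \E[\|u^n_h\|_{L^2}^2] \le \tau N \max_{1\le n\le N}\E[\|u^n_h\|_{L^2}^2] = T\max_{1\le n\le N}\E[\|u^n_h\|_{L^2}^2]$, which is again bounded thanks to the $L^\infty$-in-time $L^2$ estimate in \eqref{stab:LinfyL2}. Multiplying the two bounded factors gives the claim. (Alternatively, a single application of Young's inequality $\|\Delta_h u^n_h\|_{L^2}\|u^n_h\|_{L^2} \le \tfrac12\|\Delta_h u^n_h\|_{L^2}^2 + \tfrac12\|u^n_h\|_{L^2}^2$ at the pointwise stage leads to the same conclusion after summation.)

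There is essentially no genuine obstacle here: the bound is a direct consequence of Theorem \ref{thm:stab:2}, and the only thing one needs to notice is the identity linking $\|\nabla u^n_h\|_{L^2}^2$ to $\Delta_h u^n_h$ through \eqref{eq20230613_2}. The one mild subtlety worth flagging is that the $\|u^n_h\|_{L^2}^2$ contribution is summed in time with weight $\tau$, so one must invoke $\tau N = T$ together with the \emph{maximum-in-time} (rather than the time-summed) $L^2$ bound from \eqref{stab:LinfyL2}; since both ingredients are supplied by that theorem, the corollary follows at once.
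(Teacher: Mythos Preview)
Your proposal is correct and matches the paper's own (terse) justification essentially line for line: the paper says the corollary follows directly from the definition \eqref{eq20230613_2} of $\Delta_h$, the Cauchy--Schwarz inequality, and Theorem~\ref{thm:stab:2}, which is exactly the identity $\|\nabla u^n_h\|_{L^2}^2 = -(\Delta_h u^n_h, u^n_h)$ followed by the bounds you describe.
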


\begin{theorem}\label{thm:stab:p}
Suppose the mesh constraint $\tau \leq C \epsilon^3$ holds, and let $u^n_h\ (n=1,2,\cdots N)$ be the solution of \eqref{eq20211119_1} and \eqref{eq20211119_2}, then there exists a positive constant independent of $h$ and $\tau$ such that there holds for any $p \geq 2$ that
\begin{align}\label{stab:LinftyL2：p}
\sup _{0 \leq n \leq N} \mathbb{E}\left[\left\|u_{h}^{n}\right\|_{L^{2}}^{p}\right] \leq C.
\end{align}
\end{theorem}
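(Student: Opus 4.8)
The plan is to promote the one-step energy identity \eqref{eq20211119_8} from the proof of Theorem~\ref{thm:stab:2} to a bound for the $p$-th moment by raising a suitably rearranged version to the power $q := p/2 \geq 1$ and then running a discrete Gronwall argument at the level of $q$-th powers. Writing $X_n := \|u_h^n\|_{L^2}^2$, I would first insert the interpolation bound \eqref{eq20230401_9} for the nonlinear term into \eqref{eq20211119_8}; this lets the nonlinear estimate absorb half of the dissipation (leaving the nonnegative term $\tfrac{\epsilon\tau}{2}\|\Delta_h u_h^n\|_{L^2}^2$ on the left, which I discard) and produces a leftover contribution $\tfrac{C\tau}{\epsilon^3}X_n$. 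Here the mesh constraint $\tau \leq C\epsilon^3$ enters essentially: with the constant chosen so that $\tfrac{2C\tau}{\epsilon^3} \leq \tfrac12$, this term is absorbed directly into $X_n$ on the left at each individual step, yielding a clean recursion
\[
X_n + \|u_h^n - u_h^{n-1}\|_{L^2}^2 \leq (1 + c\tau)X_{n-1} + 2\delta(1+c\tau)(g(u_h^{n-1}),u_h^n)\Delta W^n,
\]
with $c = C/\epsilon^3$, where I retain the increment term for later use. This step-wise absorption is precisely what the $L^2$ estimate avoided by using linear Gronwall; linearity is lost once we raise to the power $q$, so the constraint becomes necessary.

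Next I would prepare the stochastic term for conditional expectations. Splitting $(g(u_h^{n-1}),u_h^n)\Delta W^n = (g(u_h^{n-1}),u_h^{n-1})\Delta W^n + (g(u_h^{n-1}),u_h^n - u_h^{n-1})\Delta W^n$, the cross term is controlled by Young's inequality using the retained increment $\|u_h^n - u_h^{n-1}\|_{L^2}^2$ together with the growth condition \eqref{eq20180813_1} (so that $\|g(u_h^{n-1})\|_{L^2}^2 \leq C(1+X_{n-1})$), which produces an extra contribution of the form $C\delta^2|\Delta W^n|^2(1+X_{n-1})$. The remaining part $M_n := 2\delta(1+c\tau)(g(u_h^{n-1}),u_h^{n-1})\Delta W^n$ is $\mathcal{F}_{t_{n-1}}$-measurable apart from the factor $\Delta W^n$, hence a conditionally mean-zero martingale increment. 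This yields
\[
X_n \leq (1+c\tau)X_{n-1} + C\delta^2|\Delta W^n|^2(1+X_{n-1}) + M_n.
\]

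I would then raise this inequality to the power $q$. Using an elementary convexity/binomial estimate of the type $(a+b+e)^q \leq (1+\lambda\tau)a^q + C_\lambda(|b|^q + |e|^q + \text{mixed lower-order terms})$ and keeping track of the powers, I take the conditional expectation $\mathbb{E}[\cdot\mid\mathcal{F}_{t_{n-1}}]$. The odd powers of $\Delta W^n$ arising from $M_n$ vanish, while the even moments satisfy $\mathbb{E}[|\Delta W^n|^{2k}\mid\mathcal{F}_{t_{n-1}}] = C_k\tau^k$; since after the odd terms drop every surviving stochastic contribution carries at least one factor $|\Delta W^n|^2$, these produce only terms of order $\tau$ times powers of $1+\mathbb{E}[X_{n-1}^q]$. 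Collecting everything gives the scalar recursion
\[
\mathbb{E}[X_n^q] \leq (1 + Cq\tau)\mathbb{E}[X_{n-1}^q] + Cq\tau,
\]
and the discrete Gronwall inequality, together with the bounded initial data $\|u_h^0\|_{L^2}=\|P_hu_0\|_{L^2}\leq\|u_0\|_{L^2}$, yields $\sup_{0\leq n\leq N}\mathbb{E}[X_n^q]=\sup_{0\leq n\leq N}\mathbb{E}[\|u_h^n\|_{L^2}^p]\leq C$, which is the claimed bound.

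The main obstacle I anticipate is the bookkeeping in raising the one-step inequality to the power $q$: one must verify that, after the conditional expectation, the martingale term genuinely contributes nothing and that the cross-term and the $|\Delta W^n|^2$ terms, once expanded by the binomial theorem and combined with the growth of $g$, generate only $O(\tau)$ perturbations of the form $C\tau(1+\mathbb{E}[X_{n-1}^q])$ rather than terms of order $\tau^{1/2}$ or terms superlinear in $\mathbb{E}[X_{n-1}^q]$. The implicitness of $u_h^n$ in the noise—which forces the split into an $\mathcal{F}_{t_{n-1}}$-measurable martingale part plus an increment absorbed by the dissipation—and its interplay with the mesh constraint are the technically delicate points; everything else reduces to the convexity estimate and the standard discrete Gronwall lemma.
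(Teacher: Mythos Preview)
Your strategy is sound and does lead to the claimed estimate, but it is genuinely different from the paper's route. The paper never raises the one-step inequality to a power; instead it bootstraps multiplicatively: starting from \eqref{eq20220108_1} it multiplies both sides by the particular combination $\|u_h^n\|_{L^2}^2+\tfrac12\|u_h^{n-1}\|_{L^2}^2$, which produces the telescoping difference $\tfrac38(\|u_h^n\|^4-\|u_h^{n-1}\|^4)$ together with a positive remainder $\tfrac18(\|u_h^n\|^2-\|u_h^{n-1}\|^2)^2$ that absorbs the cross-terms arising from the noise (see \eqref{eq20211119_13}--\eqref{eq20211119_13_2}). This gives the fourth moment; iterating the same multiplication with $\|u_h^n\|^4+\tfrac12\|u_h^{n-1}\|^4$ yields the $2^r$-th moments, and a final Young interpolation covers general $p$. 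Your approach instead absorbs the implicit $\tfrac{C\tau}{\epsilon^3}X_n$ term into the left \emph{before} any moment manipulation, then raises the resulting explicit recursion to the power $q$ and uses the multinomial expansion plus the martingale property. Both arguments invoke the mesh constraint $\tau\le C\epsilon^3$ for the same reason (to kill the implicit term at each step), and both ultimately close by discrete Gronwall; the paper's trick trades the combinatorics of your multinomial bookkeeping for a clever algebraic identity.

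One point in your write-up needs tightening: the ``odd powers of $\Delta W^n$ vanish'' argument relies on a genuine polynomial (multinomial) expansion of $(A+B+M_n)^q$, which is only available when $q=p/2$ is a positive integer. For non-integer $q$ the convexity inequality you sketch does not separate the martingale part cleanly enough to exploit $\mathbb E[(\Delta W^n)^{2k+1}\mid\mathcal F_{t_{n-1}}]=0$. The fix is exactly what the paper does in its Step~3: run your argument for integer $q$ (equivalently, even $p$) and then recover arbitrary $p\ge2$ by Young's inequality, $\mathbb E[\|u_h^\ell\|_{L^2}^p]\le \mathbb E[\|u_h^\ell\|_{L^2}^{2^r}]+C$ with $2^{r-1}\le p\le 2^r$.
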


\begin{proof}
The proof is divided into three steps. In Step 1, we prove the bound for $\mathbb{E}\left\|u_{h}^{n}\right\|_{L^{2}}^{4}$. In Step 2, we establish the bound for $\mathbb{E}\left\|u_{h}^{n}\right\|_{L^{2}}^{p}$, where $p=2^{r}$ and $r$ is an arbitrary positive integer. In Step 3, using Step 1 and Step 2, we could obtain the bound for $\mathbb{E}\left\|u_{h}^{n}\right\|_{L^{2}}^{p}$, where $p$ is an arbitrary real number and $p \geq 2$.

{\bf Step 1.} Based on $\eqref{eq20211119_8}-\eqref{eq20230401_9}$, we have
\begin{align}\label{eq20220108_1}
&\frac{1}{2}\|u^n_h\|^2_{L^2} - \frac{1}{2}\|u^{n-1}_h\|^2_{L^2} + \frac{1}{2}\|u^n_h - u^{n-1}_h\|^2_{L^2} + \epsilon \tau \|\Delta_h u^n_h\|^2_{L^2}
\\
&\qquad \leq \frac{\tau}{\epsilon}\|\nabla u_{h}^n\|_{L^{2}}^{2} + \delta (g(u^{n-1}_h), u^n_h) \Delta W^n. \notag
\end{align}

Multiplying the quantity
\begin{align}\label{eq20220108_2}
\|u_{h}^{n}\|_{L^{2}}^{2}+\frac{1}{2}\| u_{h}^{n-1}\|_{L^{2}}^{2} &= \frac{3}{4}\left(\|u_{h}^{n}\|_{L^{2}}^{2}+\| u_{h}^{n-1}\|_{L^{2}}^{2}\right)+\frac{1}{4}\left(\| u_{h}^{n}\|_{L^{2}}^{2}-\| u_{h}^{n-1}\|_{L^{2}}^{2}\right).
\end{align}

on both sides of \eqref{eq20220108_1} gives us
\begin{align}\label{eq20211119_10}
&\quad\frac{3}{8}(\| u_{h}^{n}\|_{L^{2}}^{4}-\| u_{h}^{n-1}\|_{L^{2}}^{4})+\frac{1}{8}(\| u_{h}^{n}\|_{L^{2}}^{2}-\| u_{h}^{n-1}\|_{L^{2}}^{2})^{2}
\\
&\qquad +(\frac{1}{2}\|u^n_h - u^{n-1}_h\|^2_{L^2} + \epsilon \tau \|\Delta_h u^n_h\|^2_{L^2})(\| u_{h}^{n}\|_{L^{2}}^{2}+\frac{1}{2}\|u_{h}^{n-1}\|_{L^{2}}^{2}). \notag
\\
&\leq \notag
\frac{\tau}{\epsilon}\|\nabla u_h^n\|^2_{L^2} (\| u_{h}^{n}\|_{L^{2}}^{2}+\frac{1}{2}\|u_{h}^{n-1}\|_{L^{2}}^{2})
\\
&\qquad + \delta (g(u^{n-1}_h), u^n_h) \Delta W^n(\|u_{h}^{n}\|_{L^{2}}^{2}+\frac{1}{2}\| u_{h}^{n-1}\|_{L^{2}}^{2}). \notag
\end{align}

The first term on the right-hand side of \eqref{eq20211119_10} can be estimated as
\begin{align}\label{eq20211119_11}
&\quad \frac{\tau}{\epsilon}\|\nabla u_h^n\|^2_{L^2} (\| u_{h}^{n}\|_{L^{2}}^{2}+\frac{1}{2}\|u_{h}^{n-1}\|_{L^{2}}^{2}) \\
&\le\frac{\epsilon}{2}\tau \|\Delta_h u_h^n\|^2_{L^2} (\| u_{h}^{n}\|_{L^{2}}^{2}+\frac{1}{2}\|u_{h}^{n-1}\|_{L^{2}}^{2})\notag\\
&\qquad+\frac{C\tau}{\epsilon^3}\|u_h^n\|^2_{L^2} (\frac{3}{2} \|u_{h}^{n}\|_{L^{2}}^{2} - \frac{1}{2}(\|u_{h}^{n}\|_{L^{2}}^{2} - \|u_{h}^{n-1}\|_{L^{2}}^{2})\notag\\ 
&\leq \frac{\epsilon}{2}\tau \|\Delta_h u_h^n\|^2_{L^2} (\| u_{h}^{n}\|_{L^{2}}^{2}+\frac{1}{2}\|u_{h}^{n-1}\|_{L^{2}}^{2})\notag\\
&\qquad +\frac{C \tau}{\epsilon^3} \|u_h^n\|^4_{L^2} + \frac{C \tau}{\epsilon^3} ( \|u_{h}^{n}\|_{L^{2}}^{2} - \|u_{h}^{n-1}\|_{L^{2}}^{2} )^2. \notag
\end{align}

The second term can be estimated by the Cauchy-Schwarz inequality and the growth property of $g(\cdot)$ as
\begin{align}\label{eq20211119_12}
&\quad \delta (g(u^{n-1}_h), u^n_h) \Delta W^n(\|u_{h}^{n}\|_{L^{2}}^{2}+\frac{1}{2}\| u_{h}^{n-1}\|_{L^{2}}^{2}) \\
&= \delta (g(u^{n-1}_h), u^n_h - u^{n-1}_h + u^{n-1}_h) \Delta W^n(\|u_{h}^{n}\|_{L^{2}}^{2}+\frac{1}{2}\| u_{h}^{n-1}\|_{L^{2}}^{2}) \notag \\
&\leq \left( \frac{1}{4} \|u^{n}_h - u^{n-1}_h\|^2_{L^2} + C\delta^2(1 + \|u^{n-1}_h\|^2_{L^2}) (\Delta W^{n})^2 \right) (\|u_{h}^{n}\|_{L^{2}}^{2}+\frac{1}{2}\| u_{h}^{n-1}\|_{L^{2}}^{2}) \notag \\
&\qquad + \delta (g(u^{n-1}_h), u^{n-1}_h) \Delta W^n 
(\|u_{h}^{n}\|_{L^{2}}^{2}+\frac{1}{2}\| u_{h}^{n-1}\|_{L^{2}}^{2}). \notag
\end{align}

By the Cauchy-Schwarz inequality, we have
\begin{align}\label{eq20211119_13}
&\quad C\delta^2 \left(1+\|u_{h}^{n-1}\|_{L^{2}}^{2}\right)\left({\Delta} W^{n}\right)^{2}\left(\|u_{h}^{n}\|_{L^{2}}^{2}+\frac{1}{2}\|u_{h}^{n-1}\|_{L^{2}}^{2}\right) \\
&=C\delta^2 \left(1+\|u_{h}^{n-1}\|_{L^{2}}^{2}\right)\left({\Delta} W^{n}\right)^{2}\left(\|u_{h}^{n}\|_{L^{2}}^{2}-\|u_{h}^{n-1}\|_{L^{2}}^{2}+\frac{3}{2}\|u_{h}^{n-1}\|_{L^{2}}^{2}\right) \notag \\
&\leq \theta_{1}\left(\|u_{h}^{n}\|_{L^{2}}^{2}-\|u_{h}^{n-1}\|_{L^{2}}^{2}\right)^{2} + C \delta^4 \left(1 + \|u_{h}^{n-1}\|_{L^{2}}^{4}\right)\left({\Delta} W^{n}\right)^{4} \notag \\
&\quad+C\delta^2 \|u_{h}^{n-1}\|_{L^{2}}^{4}\left({\Delta} W^{n}\right)^{2}+C\delta^2 \|u_{h}^{n-1}\|_{L^{2}}^{2}\left({\Delta} W^{n}\right)^{2}, \notag 
\end{align}
where $\theta_1 > 0$ will be specified later. Furthermore, we have 
\begin{align}\label{eq20211119_13_2}
&\quad \delta (g(u_{h}^{n-1}), u_{h}^{n-1}) {\Delta} W^{n}\left(\|u_{h}^{n}\|_{L^{2}}^{2}+\frac{1}{2}\|u_{h}^{n-1}\|_{L^{2}}^{2}\right) \\
&=\delta (g(u_{h}^{n-1}), u_{h}^{n-1}) {\Delta} W^{n}\left(\|u_{h}^{n}\|_{L^{2}}^{2}-\|u_{h}^{n-1}\|_{L^{2}}^{2}+\frac{3}{2}\|u_{h}^{n-1}\|_{L^{2}}^{2}\right) \notag \\
&\leq \theta_{2}\left(\|u_{h}^{n}\|_{L^{2}}^{2}-\|u_{h}^{n-1}\|_{L^{2}}^{2}\right)^{2} + C\delta^2 \left(1+\|u_{h}^{n-1}\|_{L^{2}}^{4}\right)\left(\Delta W^{n}\right)^{2} \notag 
\\
&\quad + \frac{3}{2} \delta (g(u_{h}^{n-1}), u_{h}^{n-1})\|u_{h}^{n-1}\|_{L^{2}}^{2} \Delta W^{n}, \notag 
\end{align}
where $\tau$, $\theta_1$, and $\theta_2$ are chosen to be small enough such that 
\begin{align}
\frac{C \tau}{\epsilon^3} + \theta_{1}+\theta_{2} \leq \frac{1}{16},
\end{align}

Then by the martingale property and properties of the Wiener process, after taking the summation from $n = 1$ to $n = \ell$ and the expectation on both sides of \eqref{eq20211119_10}, we get
\begin{align}
&\left( \frac{3}{8} - \frac{C\tau}{\epsilon^3} \right) \mathbb{E}\left[\|u_{h}^{\ell}\|_{L^{2}}^{4}\right]+\frac{1}{16} \sum_{n=1}^{\ell} \mathbb{E}\left[\left(\| u_{h}^{n}\|_{L^{2}}^{2}-\| u_{h}^{n-1}\|_{L^{2}}^{2}\right)^{2}\right]
\\
&\quad+\sum_{n=1}^{\ell} \mathbb{E}\left[\left(\frac{1}{4}\|u_{h}^{n}-u_{h}^{n-1}\|_{L^{2}}^{2}+\frac{\epsilon}{2}\tau\|\Delta_{h} u_{h}^{n}\|_{L^{2}}^{2}\right)\left(\| u_{h}^{n}\|_{L^{2}}^{2}+\frac{1}{2}\| u_{h}^{n-1}\|_{L^{2}}^{2}\right)\right] \notag 
\\
&\leq \frac{C \tau}{\epsilon^3} \sum_{n=0}^{\ell-1} \mathbb{E}\left[\| u_{h}^{n}\|_{L^{2}}^{4}\right]+\frac{3}{8} \mathbb{E}\left[\| u_{h}^{0}\|_{L^{2}}^{4}\right]+C (\delta^2 + \delta^4) \tau^{2} \sum_{n=0}^{\ell-1} \mathbb{E}\left[\| u_{h}^{n}\|_{L^{2}}^{4}\right] \notag 
\\
&\qquad +C (\delta^2 + \delta^4) \tau \sum_{n=0}^{\ell-1} \mathbb{E}\left[\| u_{h}^{n}\|_{L^{2}}^{4}\right]+C (\delta^2 + \delta^4 \tau). \notag 
\end{align}

Under the mesh constraint $\tau \leq C \epsilon^3$, we have 
\begin{align}
& \frac{1}{4} \mathbb{E}\left[\| u_{h}^{\ell}\|_{L^{2}}^{4}\right]+\frac{1}{16} \sum_{n=1}^{\ell} \mathbb{E}\left[\left(\| u_{h}^{n}\|_{L^{2}}^{2}-\| u_{h}^{n-1}\|_{L^{2}}^{2}\right)^{2}\right]
\\
&\quad+\sum_{n=1}^{\ell} \mathbb{E}\left[\left(\frac{1}{4}\|u_{h}^{n}-u_{h}^{n-1}\|_{L^{2}}^{2}+\frac{\epsilon}{2}\tau\|\Delta_{h} u_{h}^{n}\|_{L^{2}}^{2}\right)\left(\| u_{h}^{n}\|_{L^{2}}^{2}+\frac{1}{2}\| u_{h}^{n-1}\|_{L^{2}}^{2}\right)\right] \notag \\
&\leq C \left( \frac{1}{\epsilon^3} + \delta^2 + \delta^4 \right) \tau \sum_{n=0}^{\ell-1} \mathbb{E}\left[\| u_{h}^{n}\|_{L^{2}}^{4}\right]+\frac{3}{8} \mathbb{E}\left[\| u_{h}^{0}\|_{L^{2}}^{4}\right]+C (\delta^2 + \delta^4). \notag 
\end{align}

By the Gronwall's inequality, we have
\begin{align}
&\frac{1}{4} \mathbb{E}\left[\| u_{h}^{\ell}\|_{L^{2}}^{4}\right]+\frac{1}{16} \sum_{n=1}^{\ell} \mathbb{E}\left[\left(\| u_{h}^{n}\|_{L^{2}}^{2}-\| u_{h}^{n-1}\|_{L^{2}}^{2}\right)^{2}\right] 
\\
&\quad+\sum_{n=1}^{\ell} \mathbb{E}\left[\left(\frac{1}{4}\|u_{h}^{n}-u_{h}^{n-1}\|_{L^{2}}^{2}+\frac{\epsilon}{2}\tau\|\Delta_{h} u_{h}^{n}\|_{L^{2}}^{2}\right)\left(\| u_{h}^{n}\|_{L^{2}}^{2}+\frac{1}{2}\| u_{h}^{n-1}\|_{L^{2}}^{2}\right)\right] \notag\\
&\leq C.\notag
\end{align}

{\bf Step 2.} From \eqref{eq20220108_1}--\eqref{eq20211119_13_2}, we have 
\begin{align}\label{eq20220209_1}
&\quad \frac{1}{4}\left(\| u_{h}^{n}\|_{L^{2}}^{4}-\| u_{h}^{n-1}\|_{L^{2}}^{4}\right)+\frac{1}{16}\left(\| u_{h}^{n}\|_{L^{2}}^{2}-\| u_{h}^{n-1}\|_{L^{2}}^{2}\right)^{2} \\
&\qquad+\left(\frac{1}{4}\|\left(u_{h}^{n}-u_{h}^{n}\right)\|_{L^{2}}^{2}+\frac{\epsilon}{2}\tau\|\Delta_{h} u_{h}^{n}\|_{L^{2}}^{2}\right)\left(\| u_{h}^{n}\|_{L^{2}}^{2}+\frac{1}{2}\| u_{h}^{n-1}\|_{L^{2}}^{2}\right) \notag \\
& \leq C \frac{\tau}{\epsilon^3} \| u_{h}^{n}\|_{L^{2}}^{4}+C \delta^4 (1 + \| u_{h}^{n-1}\|_{L^{2}}^{4}) \left({\Delta} W^{n}\right)^{4} \notag \\
&\qquad +C \delta^2 (1 + \| u_{h}^{n-1}\|_{L^{2}}^{4}) \left({\Delta} W_{n}\right)^{2}+C\delta \| u_{h}^{n-1}\|_{L^{2}}^{4} \Delta W^{n}. \notag 
\end{align}

Similar to Step 1, by multiplying \eqref{eq20220209_1} by $\| u_{h}^{n}\|_{L^{2}}^{4}+\frac{1}{2}\| u_{h}^{n-1}\|_{L^{2}}^{4}$, we could get the 8-th moment of the $L^2$-stability of the numerical solution. Then repeat this process, we could obtain the $2^{r}$-th moment of the $L^2$-stability of the numerical solution.

{\bf Step 3.} Suppose $2^{r-1} \leq p \leq 2^{r}$, and then by Young's inequality, we have
\begin{align}\label{eq20220209_2}
\mathbb{E}\left[\| u_{h}^{\ell}\|_{L^{2}}^{p}\right] \leq \mathbb{E}\left[\| u_{h}^{\ell}\|_{L^{2}}^{2^{r}}\right]+C<\infty, 
\end{align}
where the second inequality follows from the results in Step 2. The proof is complete.
\end{proof}


\section{Error Estimates}\label{sec4}
Define a sequence of subsets as below
\begin{align} \label{eq:subset-omega}
\widetilde{\Omega}_{\kappa,n} = \Bigl\{\omega\in\Omega:
  \max\limits_{1\leq i \leq n}\|u_h^{i}\|_{H^1}^2\le\kappa\Bigr\},
\end{align}
where $\kappa$ will be specified later. 
Clearly, it holds that $\widetilde{\Omega}_{\kappa, 0} \supset \widetilde{\Omega}_{\kappa,1} \supset \cdots \supset \widetilde{\Omega}_{\kappa,\ell}$. 

Next, for each $n = 0, 1, ..., N$, define
\begin{alignat*}{2}
E^n &:= u(t_n) - u^n_h, \quad &&G^n := w(t_n) - w^n_h,\\
\Theta^n &:= u(t_n) - P_hu(t_n),\quad &&\Lambda^n := w(t_n) - P_hw(t_n),\\
\Phi^n &:= P_hu(t_n) - u^n_h,\quad &&\Psi^n := P_hw(t_n) - w^n_h.
\end{alignat*}

\begin{theorem}
The following error estimate holds for any $\ell=1, 2, \cdots, N$:
\begin{align}\label{20230623_2}
&\mathbb{E}\left[\mathds{1}_{\widetilde{\Omega}_{\kappa, \ell}}\|E^\ell\|^2_{-1,h}\right] + \tau\sum_{n=1}^{\ell}\mathbb{E}\left[\mathds{1}_{\widetilde{\Omega}_{\kappa, n}}\|\nabla E^n\|^2_{L^2(\mathcal{D})}\right]\le C\tau+Ch^2+Ch^4\bigl(\ln h\bigr)^6\tau^{-2},
\end{align}
where $C$ depends on $\sup\limits_{t\in[0,T]}\Eb{\| u(t)\|_{H^6}^6}$ and $\sup\limits_{t\in[0,T]}\Eb{\| w(t)\|_{H^2}^2}$.
\end{theorem}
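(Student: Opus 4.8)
The plan is to run a localized energy argument in the discrete $H^{-1}$-norm on the error, using the splittings $E^n=\Theta^n+\Phi^n$ and $G^n=\Lambda^n+\Psi^n$. Since $g$ has zero mean, testing \eqref{eq20211119_1} with $\eta_h=1$ shows the discrete mass is conserved, so that, together with mass conservation of the strong solution and the mean-preservation of $P_h$, one has $\Phi^n\in\mathring{V}_h$ and the operator $\Delta_h^{-1}$ of \eqref{eq:distInvLap} is available. I would subtract the scheme \eqref{eq20211119_1}--\eqref{eq20211119_2} from the weak form \eqref{SCH1:mIto:w1}--\eqref{SCH1:mIto:w2} integrated over $[t_{n-1},t_n]$, test the $u$-error equation with $\eta_h=-\Delta_h^{-1}\Phi^n$, and test the $w$-error equation with $v_h=\Phi^n$. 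By \eqref{eq3.3} the first test produces the telescoping quantity $\tfrac12\bigl(\|\Phi^n\|_{-1,h}^2-\|\Phi^{n-1}\|_{-1,h}^2+\|\Phi^n-\Phi^{n-1}\|_{-1,h}^2\bigr)$ and turns the coupling into $\tau(\Psi^n,\Phi^n)$ up to a $\Lambda^n$ projection residual; the second test replaces $(\Psi^n,\Phi^n)$ by the coercive term $\epsilon\|\nabla\Phi^n\|_{L^2}^2$ plus the projection term $\epsilon(\nabla\Theta^n,\nabla\Phi^n)$ and the nonlinear residual $\tfrac1\epsilon(f(u(t_n))-I_hf^n,\Phi^n)$. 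Multiplying the resulting identity by $\mathds{1}_{\widetilde{\Omega}_{\kappa,n}}$ and exploiting the nesting $\mathds{1}_{\widetilde{\Omega}_{\kappa,n}}\le\mathds{1}_{\widetilde{\Omega}_{\kappa,n-1}}$ is the device that makes the non-Lipschitz nonlinearity tractable, since on $\widetilde{\Omega}_{\kappa,n}$ one has $\|u_h^i\|_{H^1}^2\le\kappa$ for $i\le n$.

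The consistency and projection terms are routine. The time-truncation residual $\int_{t_{n-1}}^{t_n}\bigl(\nabla(w(s)-w(t_n)),\nabla(-\Delta_h^{-1}\Phi^n)\bigr)\,ds$ is bounded by Cauchy--Schwarz and the $H^1$-seminorm H\"older estimate of Lemma \ref{20220808_1}, contributing $O(\tau)$ after squaring and summing; the projection increment $(\Theta^n-\Theta^{n-1},-\Delta_h^{-1}\Phi^n)$ and the term $\epsilon\tau(\nabla\Theta^n,\nabla\Phi^n)$ are handled with \eqref{Ph1} and the $L^2$-H\"older estimate of Lemma \ref{lem:e3}, yielding $O(\tau)+O(h^2)$ with their $\|\nabla\Phi^n\|_{L^2}$ factors absorbed into $\epsilon\tau\|\nabla\Phi^n\|_{L^2}^2$. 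Throughout I would use the interpolation inequality $\|\Phi^n\|_{L^2}^2\le C\|\Phi^n\|_{-1,h}\|\nabla\Phi^n\|_{L^2}$ to split any stray $\|\Phi^n\|_{L^2}^2$ between the Gronwall quantity $\|\Phi^n\|_{-1,h}^2$ and the dissipation $\epsilon\tau\|\nabla\Phi^n\|_{L^2}^2$, and the moment bounds of Theorem \ref{thm:stab:2}, Corollary \ref{coro1} and Theorem \ref{thm:stab:p} to keep all expectations finite.

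The main obstacle is the nonlinear residual $\tfrac\tau\epsilon(f(u(t_n))-I_hf^n,\Phi^n)$, which I would split as $\tfrac\tau\epsilon\bigl(f(u(t_n))-I_hf(u(t_n)),\Phi^n\bigr)+\tfrac\tau\epsilon\bigl(I_h(f(u(t_n))-f(u_h^n)),\Phi^n\bigr)$. The first summand is the nodal interpolation error of the smooth function $f(u(t_n))$ and is $O(h^2)$ by \eqref{Ph1} and the $H^6$-regularity built into $C$. For the second I would write $f(u(t_n))-f(u_h^n)=\bigl(u(t_n)^2+u(t_n)u_h^n+(u_h^n)^2-1\bigr)E^n$ and use $E^n=\Theta^n+\Phi^n$; the $\Phi^n$-against-$\Phi^n$ part is controlled by the one-sided Lipschitz bound $f'\ge-1$, which on $\widetilde{\Omega}_{\kappa,n}$ leaves only a harmless $\|\Phi^n\|_{L^2}^2$ contribution reabsorbed as above, while the variable coefficient is controlled in $L^\infty$ through the Sobolev embedding for $u(t_n)$ and the logarithmic inverse estimate $\|u_h^n\|_{L^\infty}\le C|\ln h|^{1/2}\|u_h^n\|_{H^1}\le C|\ln h|^{1/2}\sqrt{\kappa}$. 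The genuinely delicate estimate is the interpolation error of the piecewise cubic $(u_h^n)^3$, whose element-wise second derivative equals $6u_h^n\,\nabla u_h^n\otimes\nabla u_h^n$; nodal interpolation theory together with inverse inequalities and the logarithmic $L^\infty$ bound converts this into powers of $|\ln h|$ and of the discrete seminorms $\|\nabla u_h^n\|_{L^2}$ and $\|\Delta_h u_h^n\|_{L^2}$. Since the latter are furnished by Theorem \ref{thm:stab:2} only in the $\tau$-weighted summed form $\tau\sum_n\mathbb{E}\|\Delta_h u_h^n\|_{L^2}^2\le C$, passing to a pointwise-in-time bound and resumming is what introduces the negative powers of $\tau$, and the full bookkeeping of these logarithmic and inverse factors delivers exactly the $Ch^4(\ln h)^6\tau^{-2}$ term of \eqref{20230623_2}.

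Finally the multiplicative-noise term $\delta\int_{t_{n-1}}^{t_n}\bigl(g(u(s))-g(u_h^{n-1}),-\Delta_h^{-1}\Phi^n\bigr)\,dW_s$ needs care because $-\Delta_h^{-1}\Phi^n$ is only $\F_{t_n}$-measurable. I would write $\Phi^n=\Phi^{n-1}+(\Phi^n-\Phi^{n-1})$: for the $\F_{t_{n-1}}$-measurable part the martingale property eliminates the leading stochastic contribution in expectation, after reducing the $\F_{t_n}$-measurable indicator to the $\F_{t_{n-1}}$-measurable $\mathds{1}_{\widetilde{\Omega}_{\kappa,n-1}}$, the discrepancy being controlled by Chebyshev's inequality and the higher-moment bound of Theorem \ref{thm:stab:p}. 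For the increment part I would factor $-\Delta_h^{-1}(\Phi^n-\Phi^{n-1})$ out of the $ds$-integral, apply Cauchy--Schwarz with \eqref{eq3.3}, It\^o's isometry, and Young's inequality to absorb it into $\tfrac12\|\Phi^n-\Phi^{n-1}\|_{-1,h}^2$. The factor $g(u(s))-g(u_h^{n-1})$ is split as $\bigl(g(u(s))-g(u(t_{n-1}))\bigr)+\bigl(g(u(t_{n-1}))-g(u_h^{n-1})\bigr)$ and bounded by the global Lipschitz property \eqref{eq20180812_2}, the $L^2$-H\"older estimate of Lemma \ref{lem:e3} (giving $O(s-t_{n-1})$), and $\|E^{n-1}\|_{L^2}\le C\|\Theta^{n-1}\|_{L^2}+C\|\Phi^{n-1}\|_{L^2}$, where the last term is again split by the interpolation inequality. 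Taking expectations, summing from $n=1$ to $\ell$, and invoking the discrete Gronwall inequality then yields \eqref{20230623_2}. The essential difficulty, which the localization and the nesting of the indicators are designed to resolve, is reconciling the monotonicity argument for the cubic under the interpolation operator with the $\F_{t_n}$-measurability forced by the multiplicative noise.
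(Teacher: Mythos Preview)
Your overall architecture is essentially the paper's: localize by $\mathds{1}_{\widetilde{\Omega}_{\kappa,n}}$, test the $u$-error with $-\Delta_h^{-1}\Phi^n$ and the $w$-error with $\Phi^n$, telescope in $\|\cdot\|_{-1,h}$, absorb into $\epsilon\tau\|\nabla\Phi^n\|_{L^2}^2$, and feed the H\"older lemmas into the consistency terms. That part is fine. Two of your details, however, depart from the paper in ways that do not quite work as you describe.

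\medskip
\textbf{The nonlinear splitting.} You decompose
\[
f(u(t_n))-I_hf^n=\bigl(f(u(t_n))-I_hf(u(t_n))\bigr)+I_h\bigl(f(u(t_n))-f(u_h^n)\bigr),
\]
and then want to invoke the one-sided Lipschitz bound on the second piece. The difficulty is that $I_h$ sits outside the whole factor: after writing $f(u(t_n))-f(u_h^n)=q\,E^n$ with $q=u(t_n)^2+u(t_n)u_h^n+(u_h^n)^2-1$, you face $(I_h(q\,\Phi^n),\Phi^n)$, and nodal interpolation does not preserve the sign structure needed for monotonicity, so ``$f'\ge -1$ leaves only $\|\Phi^n\|_{L^2}^2$'' is not justified here. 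The paper avoids this by using the three-term split
\[
f(u(t_n))-I_hf^n=\bigl(f(u(t_n))-f(P_hu(t_n))\bigr)+\bigl(f(P_hu(t_n))-f^n\bigr)+\bigl(f^n-I_hf^n\bigr),
\]
so that the middle term is $(f(P_hu(t_n))-f(u_h^n),\Phi^n)$ with no $I_h$, and one-sided Lipschitz gives $-\tfrac{\tau}{\epsilon}(f(P_hu(t_n))-f(u_h^n),\Phi^n)\le \tfrac{\tau}{\epsilon}\|\Phi^n\|_{L^2}^2$ directly. The genuine interpolation piece is the last term $f^n-I_hf^n$ (interpolation of the \emph{discrete} cubic $(u_h^n)^3$), which is exactly the term you label ``genuinely delicate'' --- but it does not appear in your splitting; in yours the interpolation error is of the smooth $f(u(t_n))$, which is already $O(h^2)$ and cannot be the source of the logarithmic/$\tau^{-2}$ factor.

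\medskip
\textbf{The origin of $h^4(\ln h)^6\tau^{-2}$ and the role of $\kappa$.} You attribute the negative powers of $\tau$ to the discrete Laplacian bound $\tau\sum_n\mathbb{E}\|\Delta_hu_h^n\|_{L^2}^2\le C$ and ``passing to a pointwise-in-time bound''. The paper does not use $\|\Delta_hu_h^n\|$ at all here. Instead, on $\widetilde{\Omega}_{\kappa,n}$ one has $\|u_h^n\|_{L^\infty}^4\le C(\ln h)^4\kappa^2$ by the $2$D log-inverse inequality, and the interpolation error of $f^n$ is bounded by $Ch^4(\ln h)^4\kappa^2\,\tau\mathbb{E}[\mathds{1}\|\nabla u_h^n\|_{L^2}^2]$; after summing in $n$, Corollary~\ref{coro1} gives $Ch^4(\ln h)^4\kappa^2$. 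The factor $\tau^{-2}$ enters only through the \emph{choice of $\kappa$}, which you never specify: from Theorem~\ref{thm:stab:2} and Corollary~\ref{coro1} one has $\max_n\mathbb{E}\|u_h^n\|_{H^1}^2\le C\tau^{-1}$, so Markov forces $\kappa:=C|\ln h|\,\tau^{-1}$ to make $\mathbb{P}[\widetilde{\Omega}_{\kappa,\ell}]\to 1$, and then $(\ln h)^4\kappa^2=C(\ln h)^6\tau^{-2}$. Without fixing $\kappa$ you cannot arrive at the specific exponent in \eqref{20230623_2}, and your proposed mechanism via $\|\Delta_hu_h^n\|$ does not match the actual bookkeeping.
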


\begin{proof}
From the weak formulation (\ref{SCH1:mIto:w1})-(\ref{SCH1:mIto:w2}), we get
\begin{align}
(u(t_n), \eta_h) &= (u(t_{n-1}), \eta_h) - \int_{t_{n-1}}^{t_n} (\nabla w(s), \nabla \eta_h) ds \label{eq20220224_1}\\
&\quad + \delta \int_{t_{n-1}}^{t_n} (g(u(s)), \eta_h) dW_s\qquad\quad\ \ \qquad \forall \eta_h\in V_h,\notag\\
(w(t_n), v_h) &= \epsilon (\nabla u(t_n), \nabla v_h) + \frac{1}{\epsilon}(f(u(t_n)), v_h)\qquad\forall v_h\in V_h. \label{eq20220224_2}
\end{align}
hold $\mathbb{P}$-almost surely. Subtracting \eqref{eq20211119_1}-\eqref{eq20211119_2} from \eqref{eq20220224_1}-\eqref{eq20220224_2}, we get
\begin{align}
(E^n, \eta_h) &= (E^{n-1}, \eta_h)- \int_{t_{n-1}}^{t_n} (\nabla w(s) - \nabla w^n_h, \nabla \eta_h) ds\\
&\quad +\delta \int_{t_{n-1}}^{t_n} (g(u(s)) - g(u^{n-1}_h), \eta_h) dW_s\qquad\quad\forall \eta_h\in V_h,\notag\\
(G^n, v_h) &= \epsilon (\nabla E^n, \nabla v_h) + \frac{1}{\epsilon}(f(u(t_n)) - I_hf^n, v_h)\quad\quad\ \forall v_h\in V_h.
\end{align}

Choosing $\eta_h = -\Delta_h^{-1} \Phi^n\in \mathring{V}_h$ and $v_h = \tau \Phi^n\in \mathring{V}_h$ yields
\begin{align}
(E^n - E^{n-1}, -\Delta_h^{-1} \Phi^n) &= - \int_{t_{n-1}}^{t_n} (\nabla (w(s) - w^n_h), \nabla (-\Delta_h^{-1} \Phi^n)) ds \notag
\\
&\quad +\delta \int_{t_{n-1}}^{t_n} (g(u(s)) - g(u^n_h), -\Delta_h^{-1} \Phi^n) dW_s, \notag
\\
(G^n, \tau \Phi^n) &= \epsilon (\nabla E^n, \nabla (\tau \Phi^n)) + \frac{1}{\epsilon}(f(u(t_n)) - I_h f^n, \tau \Phi^n).\notag
\end{align}

Thus we have
\begin{align}\label{eq20230609_1}
(\nabla\Delta_h^{-1} (\Phi^n - \Phi^{n-1}), \nabla\Delta_h^{-1} \Phi^n) &= (\Theta^n - \Theta^{n-1}, \Delta_h^{-1} \Phi^n) \\
&\quad + \tau(\nabla\Lambda^n, \nabla\Delta_h^{-1}\Phi^n) + \tau(\nabla\Psi^n, \nabla\Delta_h^{-1}\Phi^n)\notag\\
&\quad + \int_{t_{n-1}}^{t_n} (\nabla w(s) - \nabla w(t_n), \nabla \Delta_h^{-1} \Phi^n) ds \notag\\
&\quad + \delta \int_{t_{n-1}}^{t_n} (g(u(s)) - g(u^n_h), -\Delta_h^{-1} \Phi^n) dW_s, \notag\\
\tau (\Lambda^n + \Psi^n, \Phi^n) &= \epsilon \tau (\nabla \Theta^n, \nabla \Phi^n) + \epsilon \tau (\nabla \Phi^n, \nabla \Phi^n) \\
&\quad + \frac{\tau}{\epsilon}(f(u(t_n)) - I_h f^n, \Phi^n). \notag
\end{align}

Taking expectation on both sides over the set $\widetilde{\Omega}_{\kappa,n}$ and substituting the second equation into the first yield
\begin{align}\label{eq20230610_1}
&\E\left[\mathds{1}_{\widetilde{\Omega}_{\kappa, n}}(\Phi^n - \Phi^{n-1}, \Phi^n)_{-1, h}\right] + \epsilon \tau \E\left[\mathds{1}_{\widetilde{\Omega}_{\kappa, n}}(\nabla \Phi^n, \nabla \Phi^n)\right] \\
=& \E\left[\mathds{1}_{\widetilde{\Omega}_{\kappa, n}}(\Theta^n - \Theta^{n-1}, \Delta_h^{-1} \Phi^n)\right]
- \epsilon \tau \E\left[\mathds{1}_{\widetilde{\Omega}_{\kappa, n}}(\nabla \Theta^n, \nabla \Phi^n)\right] \notag
\\
&- \frac{\tau}{\epsilon} \E\left[\mathds{1}_{\widetilde{\Omega}_{\kappa, n}}(f(u(t_n)) - I_hf^n, \Phi^n)\right]+ \E\left[\mathds{1}_{\widetilde{\Omega}_{\kappa, n}}\int_{t_{n-1}}^{t_n} (\nabla w(s) - \nabla w(t_n), \nabla \Delta_h^{-1} \Phi^n) ds\right] \notag
\\
&+ \delta \E\left[\mathds{1}_{\widetilde{\Omega}_{\kappa, n}}\int_{t_{n-1}}^{t_n} (g(u(s)) - g(u^n_h), -\Delta_h^{-1} \Phi^ndW_s)\right]+\tau\E\left[\mathds{1}_{\widetilde{\Omega}_{\kappa, n}}(\nabla\Lambda^n, \nabla\Delta_h^{-1}\Phi^n)\right]\notag
\\
:=& \sum_{i = 1}^6 T_i. \notag
\end{align}

The first term on the left-hand side of \eqref{eq20230610_1} can be bounded by
\begin{align}
&\E\left[\mathds{1}_{\widetilde{\Omega}_{\kappa, n}}(\Phi^n - \Phi^{n-1}, \Phi^n)_{-1, h}\right]\\
=& \frac{1}{2}\E\left[\mathds{1}_{\widetilde{\Omega}_{\kappa, n}}\|\Phi^n\|^2_{-1,h}\right] - \frac{1}{2}\E\left[\mathds{1}_{\widetilde{\Omega}_{\kappa, n}}\|\Phi^{n-1}\|^2_{-1,h}\right] + \frac{1}{2}\E\left[\mathds{1}_{\widetilde{\Omega}_{\kappa, n}}\|\Phi^n - \Phi^{n-1}\|^2_{-1,h}\right]\notag\\
=& \frac{1}{2}\E\left[\mathds{1}_{\widetilde{\Omega}_{\kappa, n}}\|\Phi^n\|^2_{-1,h}\right] - \frac{1}{2}\E\left[\mathds{1}_{\widetilde{\Omega}_{\kappa, n-1}}\|\Phi^{n-1}\|^2_{-1,h}\right] + \frac{1}{2}\E\left[\mathds{1}_{\widetilde{\Omega}_{\kappa, n}}\|\Phi^n - \Phi^{n-1}\|^2_{-1,h}\right]\notag\\
&+\frac{1}{2}\E\left[(\mathds{1}_{\widetilde{\Omega}_{\kappa, n-1}}-\mathds{1}_{\widetilde{\Omega}_{\kappa, n}})\|\Phi^{n-1}\|^2_{-1,h}\right]\notag\\
\ge& \frac{1}{2}\E\left[\mathds{1}_{\widetilde{\Omega}_{\kappa, n}}\|\Phi^n\|^2_{-1,h}\right] - \frac{1}{2}\E\left[\mathds{1}_{\widetilde{\Omega}_{\kappa, n-1}}\|\Phi^{n-1}\|^2_{-1,h}\right] + \frac{1}{2}\E\left[\mathds{1}_{\widetilde{\Omega}_{\kappa, n}}\|\Phi^n - \Phi^{n-1}\|^2_{-1,h}\right]\notag.
\end{align}

The first term $T_1$ on the right-hand side of \eqref{eq20230610_1} is zero by the definition of the $P_h$ operator. 

Using the Young's inequality and properties of the projection, the second term $T_2$ can be bounded by
\begin{align}
T_2 &\leq \epsilon \tau\mathbb{E}\left[\mathds{1}_{\widetilde{\Omega}_{\kappa, n}}\|\nabla\Theta^n\|^2_{L^2}\right] + \frac{\epsilon \tau}{4} \mathbb{E}\left[\mathds{1}_{\widetilde{\Omega}_{\kappa, n}}\|\nabla\Phi^n\|^2_{L^2}\right]
\\
&\leq C\tau h^2 \mathbb{E}\left[\mathds{1}_{\widetilde{\Omega}_{\kappa, n}}\mid u(t_n)\mid^2_{H^2}\right] + \frac{\epsilon \tau}{4} \mathbb{E}\left[\mathds{1}_{\widetilde{\Omega}_{\kappa, n}}\|\nabla\Phi^n\|^2_{L^2}\right]. \notag
\end{align}

In order to estimate the third term $T_3$, we write
\begin{align}\label{derrest:7}
( f(u(t_n)) - I_hf^{n}, \Phi^{n} ) &= ( f(u(t_{n}) - f(P_h u(t_{n})), \Phi^{n} ) \\
   &\quad +  (f(P_h u(t_{n})) - f^{n}, \Phi^{n}) + (f^{n}-I_hf^{n}, \Phi^{n}) \notag.
\end{align}
Using the properties of the projection and the embedding theorem, we have
\begin{align}
&-\frac{\tau}{\epsilon}\E\left[\mathds{1}_{\widetilde{\Omega}_{\kappa, n}}\bigl( f(u(t_{n})) - f(P_h u(t_{n})), \Phi^{n} \bigr) \right] \label{derrest:9}\\
&\quad  = -\frac{\tau}{\epsilon} \E \left[\mathds{1}_{\widetilde{\Omega}_{\kappa, n}} \bigl( \Theta^{n} \bigl(\sum_{i=0}^{2}(u(t_{n}))^{i}(P_h u(t_{n}))^{2-i}-1\bigr),\Phi^{n} \bigr) \right] \notag\\
&\quad  \leq C\tau \E \left[\mathds{1}_{\widetilde{\Omega}_{\kappa, n}} \|\sum_{i=0}^{2}(u(t_{n}))^{i}(P_h u(t_{n}))^{2-i}-1\|_{L^{\infty}}^2 \|\Theta^{n}\|_{L^2}^2 \right] + \tau\E \left[\mathds{1}_{\widetilde{\Omega}_{\kappa, n}}\|\Phi^{n}\|_{L^2}^2 \right] \notag\\
&\quad \leq C\tau\Bigl( \E \left[\mathds{1}_{\widetilde{\Omega}_{\kappa, n}}\left(\|P_h u(t_{n})\|_{L^\infty}^{{6}}
+\|u(t_{n})\|_{L^\infty}^{{6}}+|D|^{3}\right) \right] \Bigr)^{\frac{2}{3}} \Bigl( \E \left[\mathds{1}_{\widetilde{\Omega}_{\kappa, n}}\|\Theta^{n}\|_{L^2}^{6} \right] \Bigr)^{\frac13} \notag \\
&\qquad + \tau\E \left[\mathds{1}_{\widetilde{\Omega}_{\kappa, n}}\|\Phi^{n}\|_{L^2}^2 \right] \notag\\
&\quad \leq C\tau h^2+C\tau h^2\E \left[\mathds{1}_{\widetilde{\Omega}_{\kappa, n}}\|u(t_{n})\|_{H^2}^{{6}}\right] + \tau\E \left[\mathds{1}_{\widetilde{\Omega}_{\kappa, n}}\|\Phi^{n}\|_{L^2}^2 \right] \notag.
\end{align}
The second term on the right-hand
side of \eqref{derrest:7} can be bounded by
\begin{align}
\label{derrest:9_1}
-\frac{\tau}{\epsilon}\E \left[\mathds{1}_{\widetilde{\Omega}_{\kappa, n}}\bigl( f(P_h u(t_{n})) - f^{n}, \Phi^{n} \bigr) \right] \leq \frac{\tau}{\epsilon}\E \left[\mathds{1}_{\widetilde{\Omega}_{\kappa, n}}\|\Phi^{n} \|^2_{L^2} \right].
\end{align}
Using the inverse inequality and equation \eqref{eq:subset-omega}, we have
\begin{align}\label{eq20230611_1}
\mathds{1}_{\widetilde{\Omega}_{\kappa, n}}\max\limits_{1\le n\le\ell}\|u_h^n\|_{L^\infty}^2&\le C(\ln h)^2\mathds{1}_{\widetilde{\Omega}_{\kappa, n}}\bigl(\max\limits_{1\le n\le\ell}\|u_h^n\|_{L^2}^2+\max\limits_{1\le n\le\ell}\|\nabla u_h^n\|_{L^2}^2\bigr)\\
&\le C(\ln h)^2\kappa.\notag
\end{align}
Then by the properties of the interpolation operator, the inverse inequality, and equation \eqref{eq20230611_1}, the third term on the right-hand side of \eqref{derrest:7} can be handled by
\begin{align}\label{eq20180711_17}
&-\frac{\tau}{\epsilon}\E \left[\mathds{1}_{\widetilde{\Omega}_{\kappa, n}}\bigl( f^{n}-I_hf^{n}, \Phi^{n} \bigr) \right]\\
&\qquad\le C\tau\E\left[\mathds{1}_{\widetilde{\Omega}_{\kappa, n}}\|f^{n}-I_hf^{n}\|_{H^{-1}}^2\right]+\frac{\epsilon}{8}\tau\E \left[\mathds{1}_{\widetilde{\Omega}_{\kappa, n}}\|\nabla\Phi^{n}\|_{L^2}^2 \right]\notag\\
&\qquad\le C\tau h^4\E \left[\mathds{1}_{\widetilde{\Omega}_{\kappa, n}}\sum_{K}|(u_h^{n})^3|^2_{H^1(K)} \right]+\frac{\epsilon}{8}\tau\E \left[\mathds{1}_{\widetilde{\Omega}_{\kappa, n}}\|\nabla\Phi^{n}\|^2_{L^2} \right]\notag\\
&\qquad\le C\tau h^4\E \left[\mathds{1}_{\widetilde{\Omega}_{\kappa, n}}\sum_{K}\|(u_h^{n})^2\nabla u_h^{n}\|_{L^2(K)}^2 \right]+\frac{\epsilon}{8}\tau\E \left[\mathds{1}_{\widetilde{\Omega}_{\kappa, n}}\|\nabla\Phi^{n}\|^2_{L^2} \right]\notag\\
&\qquad\le C\tau h^4\E \left[\mathds{1}_{\widetilde{\Omega}_{\kappa, n}}\|\nabla u_h^{n}\|_{L^2}^2\max\limits_{1\leq n \leq m}\|u_h^{n}\|_{L^\infty}^4\right]+\frac{\epsilon}{8}\tau\E \left[\mathds{1}_{\widetilde{\Omega}_{\kappa, n}}\|\nabla\Phi^{n}\|^2_{L^2} \right]\notag\\
&\qquad\le Ch^4\bigl(\ln h\bigr)^4\kappa^2\tau\E \left[\mathds{1}_{\widetilde{\Omega}_{\kappa, n}}\|\nabla u_h^{n}\|_{L^2}^2\right]+\frac{\epsilon}{8}\tau\E \left[\mathds{1}_{\widetilde{\Omega}_{\kappa, n}}\|\nabla\Phi^{n}\|^2_{L^2} \right]\notag.
\end{align}
Combine \eqref{derrest:7}--\eqref{eq20180711_17} to obtain
\begin{align} \label{derrest:11}
T_3 \leq& C\tau\E\left[\mathds{1}_{\widetilde{\Omega}_{\kappa, n}}\|\Phi^{n}\|_{L^2}^2 \right]+\frac{\epsilon}{8}\tau\E \left[\mathds{1}_{\widetilde{\Omega}_{\kappa, n}}\|\nabla\Phi^{n}\|^2_{L^2} \right]+Ch^2\tau\\
&+Ch^2\tau\E \left[\mathds{1}_{\widetilde{\Omega}_{\kappa, n}}\|u(t_{n})\|_{H^2}^{{6}}\right]+Ch^{6-d}\bigl(\ln(h^{-\beta})\bigr)^8\tau^{-4}\tau\E \left[\mathds{1}_{\widetilde{\Omega}_{\kappa, n}}\|\nabla u_h^{n}\|_{L^2}^2\right]\notag\\
\leq& C\tau\E\left[\mathds{1}_{\widetilde{\Omega}_{\kappa, n}}\|\Phi^{n}\|_{-1,h}^2 \right]+\frac{\epsilon}{4}\tau\E \left[\mathds{1}_{\widetilde{\Omega}_{\kappa, n}}\|\nabla\Phi^{n}\|^2_{L^2} \right]+Ch^2\tau\notag\\
&+Ch^2\tau\sup_{t\in[0,T]}\E \left[\mathds{1}_{\widetilde{\Omega}_{\kappa, n}}\|u(t)\|_{H^2}^{{6}}\right]+Ch^4\bigl(\ln h\bigr)^4\kappa^2\tau\E \left[\mathds{1}_{\widetilde{\Omega}_{\kappa, n}}\|\nabla u_h^{n}\|_{L^2}^2\right]\notag.
\end{align}
Notice that the last term can be bounded by Corollary \ref{coro1} after taking the summation.

By Lemma \ref{20220808_1}, the fourth term $T_4$ on the right-hand side of \eqref{eq20230610_1} can be bounded by
\begin{align}
T_4 \leq&\E\left[\mathds{1}_{\widetilde{\Omega}_{\kappa, n}}\int_{t_{n-1}}^{t_n}2 \|\nabla w(s) - \nabla w(t_n)\|^2_{L^2} + C\|\Phi^n\|^2_{-1,h}ds \right]\\
\le&C\tau^2\sup_{t\in[0,T]}\Eb{\| u(t)\|_{H^6}^6}+\tau\mathbb{E}\left[\mathds{1}_{\widetilde{\Omega}_{\kappa, n}}\|\Phi^n\|^2_{-1,h}\right].\notag
\end{align}

By Lemma \ref{lem:e3}, the fifth term $T_5$ on the right-hand side of \eqref{eq20230610_1} can be estimated as:
\begin{align}\label{eq20230610_2}
T_5 &= \delta \E\left[\mathds{1}_{\widetilde{\Omega}_{\kappa, n}}\int_{t_{n-1}}^{t_n} (g(u(s)) - g(u^n_h), -\Delta_h^{-1}\Phi^ndW_s)\right]\\
&\le C\tau^2\bigl(\sup_{t\in[0,T]} \mathbb{E} \left [ \| \Delta u (t) \|^2_{L^2} \right] + C \sup_{t\in[0,T]} \mathbb{E} \left [ \| u (t) \|^6_{H^1} \right]\bigr)+C\tau\E\left[\mathds{1}_{\widetilde{\Omega}_{\kappa, n}}\|\Theta^n\|_{L^2}^2\right]\notag\\
&\quad+C\tau\E\left[\mathds{1}_{\widetilde{\Omega}_{\kappa, n}}\|\Phi^n\|_{L^2}^2\right]+\frac14\mathbb{E}\left[\mathds{1}_{\widetilde{\Omega}_{\kappa, n}}\|\Delta_h^{-1}\Phi^n - \Delta_h^{-1}\Phi^{n-1}\|^2_{L^2}\right]\notag\\
&\le C\tau^2\bigl(\sup_{t\in[0,T]} \mathbb{E} \left [ \|  \Delta u (t) \|^2_{L^2} \right]+ C \sup_{t\in[0,T]} \mathbb{E} \left [ \| u (t) \|^6_{H^1} \right]\bigr)+C\tau h^4\sup_{t\in[0,T]}\Eb{\| u(t)\|_{H^2}^2} \notag\\
&\quad+ \frac{\epsilon\tau}{4} \mathbb{E}\left[\mathds{1}_{\widetilde{\Omega}_{\kappa, n}}\|\nabla\Phi^n\|^2_{L^2}\right]+C\tau \mathbb{E}\left[\mathds{1}_{\widetilde{\Omega}_{\kappa, n}}\|\Phi^n\|^2_{-1,h}\right]+\frac14\mathbb{E}\left[\mathds{1}_{\widetilde{\Omega}_{\kappa, n}}\|\Phi^n - \Phi^{n-1}\|^2_{-1,h}\right].\notag
\end{align}

The sixth term $T_6$ on the right-hand side of \eqref{eq20230610_1} can be bounded by
\begin{align}
T_6 &\leq C\tau h^2\sup_{t\in[0,T]}\Eb{\| w(t)\|_{H^2}^2}+\tau\mathbb{E}\left[\|\Phi^n\|^2_{-1,h}\right].
\end{align}

By Theorem \ref{thm:stab:2} and Corollary \ref{coro1}, we have
\begin{equation}\label{eq20230622_1}
\max_{1 \leq n \leq N} \E\left[\|\nabla u^n_h\|^2_{H^1}\right]\le C\tau^{-1}.
\end{equation}
Choosing $\kappa:=C|\ln h|\tau^{-1}$, using equation \eqref{eq20230622_1}, the Markov's inequality, and the discrete Burkholder--Davis--Gundy inequalities \cite{beiglbock2015pathwise,burkholder1966martingale,burkholder1970extrapolation,davis1970intergrability}, we have
\begin{align}\label{eq20230611_2}
\P\left[\widetilde{\Omega}_{\kappa,\ell}\right]&\ge 1-\frac{\E\left[\max\limits_{1\le n\le\ell}\|u_h^n\|_{H^1}^2\right]}{C|\ln h|\tau^{-1}}\\
&\ge 1- \frac{C}{|\ln h|}\rightarrow1\ \text{as}\ h\rightarrow0\notag.
\end{align}

Combining \eqref{eq20230610_1}--\eqref{eq20230611_2} and using the discrete Gronwall inequality and Corollary \eqref{coro1}, we obtain the conclusion.
\end{proof}

\begin{remark}
If the following stability result holds:
\begin{equation*}
\max_{1 \leq n \leq N} \E\left[\|\nabla u^n_h\|^2_{H^1}\right]\le C,
\end{equation*}
then the last term on the right-hand side of \eqref{20230623_2} will be $Ch^4\bigl(\ln h\bigr)^6$ by choosing $\kappa:=C|\ln h|$. Furthermore, if we have
\begin{equation*}
\max_{1 \leq n \leq N} \E\left[\|\nabla u^n_h\|^p_{H^1}\right]\le C,
\end{equation*}
we can remove the probability one set $\widetilde{\Omega}_{\kappa,n}$ in the error estimates, i.e.,
\begin{align}
&\mathbb{E}\left[\|E^\ell\|^2_{-1,h}\right] + \tau\sum_{n=1}^{\ell}\mathbb{E}\left[\|\nabla E^n\|^2_{L^2(\mathcal{D})}\right]\le C\tau+Ch^2.\notag
\end{align}
\end{remark}

\section{Numerical Experiments}\label{sec5}
This section presents the results from three numerical tests. In all tests, the domain chosen is $\mathcal{D} = [-1, 1] \times [-1, 1]$. The purpose of Tests 1 and 2 is to check the stability and evolution of the numerical approximations from (\ref{eq20211119_1}) under two noise intensities, $\delta = 1, 10$. Test 1 is based on the initial condition with a circular zero-level set. Test 2 is based on the initial condition with an ellipse as its zero-level set. In Test 3, we compute the $L^{\infty} \mathbb{E} L^2$ and $\mathbb{E}L^2 H^1$ errors (\ref{eqn_error}) to check for the theoretical orders of convergence.

\textbf{Test 1:}
For this test, we use the initial condition $u^0_h = P_hu_0$ where
$$
u_0(x, y) = \tanh \left ( \frac{x^2 + y^2 - 0.6^2}{\sqrt{2} \epsilon} \right ),
$$
and the diffusion term is $g(u) = u$. The figures (\ref{fig:my_label1}) and (\ref{fig:my_label2}) show the $\mathbb{E} L^2$ and $\mathbb{E} H^1$ stability results of the numerical solution in one sample and the average of samples, respectively.
Notice that the stability curves are bounded under both diffusion intensities. Also, the $\mathbb{E} H^1$ stability is larger as the diffusion intensity is increased.
\captionsetup[subfigure]{labelformat=empty}
\begin{figure}[h!]%
    \renewcommand{\thefigure}{5.\arabic{figure}}
    \pagecolor{white}
    \centering
    \subfloat[\centering $\delta = 1$]{{\includegraphics[width=6cm]{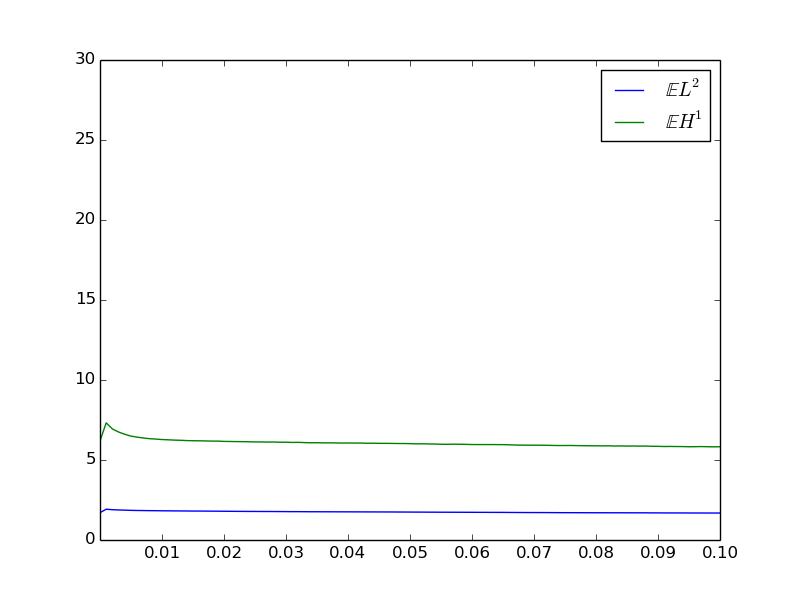}}}%
    \qquad
    \subfloat[\centering $\delta = 10$]{{\includegraphics[width=6cm]{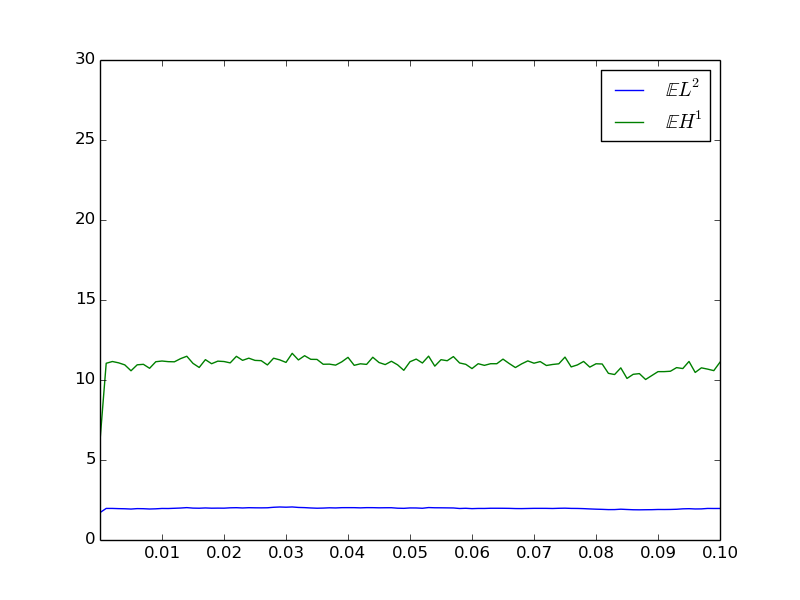}}}%
    \caption{$\mathbb{E} L^2$ and $\mathbb{E} H^1$ stability curves (one sample): $\epsilon = 0.1$, $h \approx 0.044$, and $\tau = 0.001$.}
    \label{fig:my_label1}
\end{figure}

\captionsetup[subfigure]{labelformat=empty}
\begin{figure}[h!]%
    \renewcommand{\thefigure}{5.\arabic{figure}}
    \pagecolor{white}
    \centering
    \subfloat[\centering $\delta = 1$]{{\includegraphics[width=6cm]{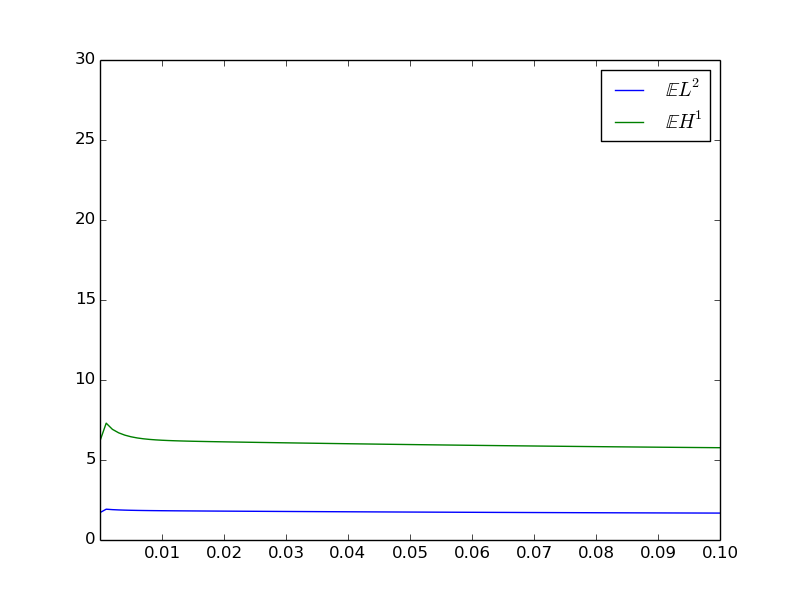}}}%
    \qquad
    \subfloat[\centering $\delta = 10$]{{\includegraphics[width=6cm]{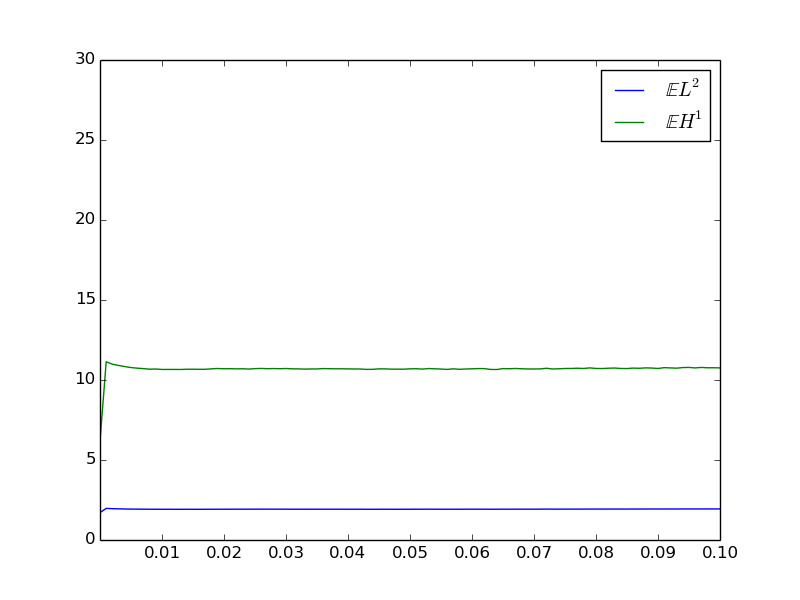}}}%
    \caption{$\mathbb{E} L^2$ and $\mathbb{E} H^1$ stability curves (average): $\epsilon = 0.1$, $h \approx 0.044$, and $\tau = 0.001$.}
    \label{fig:my_label2}
\end{figure}
\newpage
Then below these, the figures (\ref{fig:my_label3}) and (\ref{fig:my_label4}) show the zero-level sets of one sample and the average of samples, respectively. We can see that the evolution is of a shrinking circle that stabilizes at the final time $T = 0.1$.
\captionsetup[subfigure]{labelformat=empty}
\begin{figure}[h!]%
    \renewcommand{\thefigure}{5.\arabic{figure}}
    \pagecolor{white}
    \centering
    \subfloat[\centering $\delta = 1$]{{\includegraphics[width=6cm]{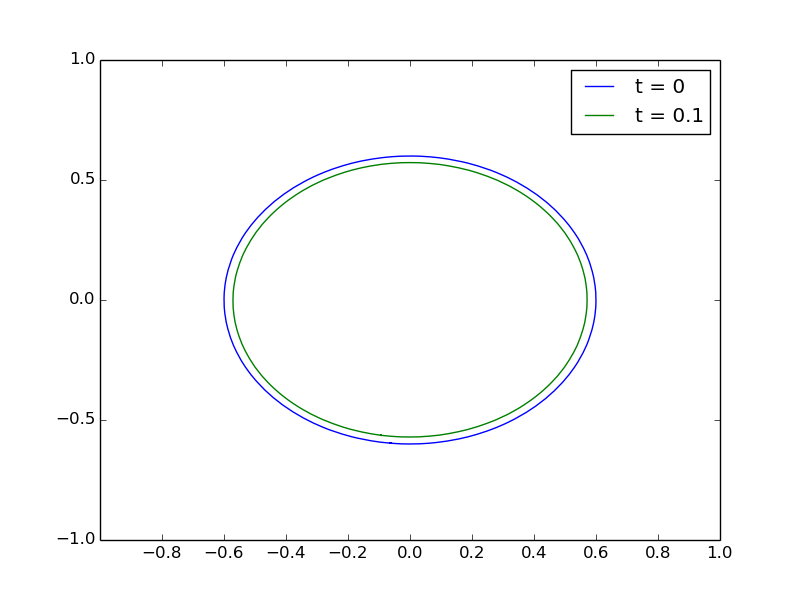}}}%
    \qquad
    \subfloat[\centering $\delta = 10$]{{\includegraphics[width=6cm]{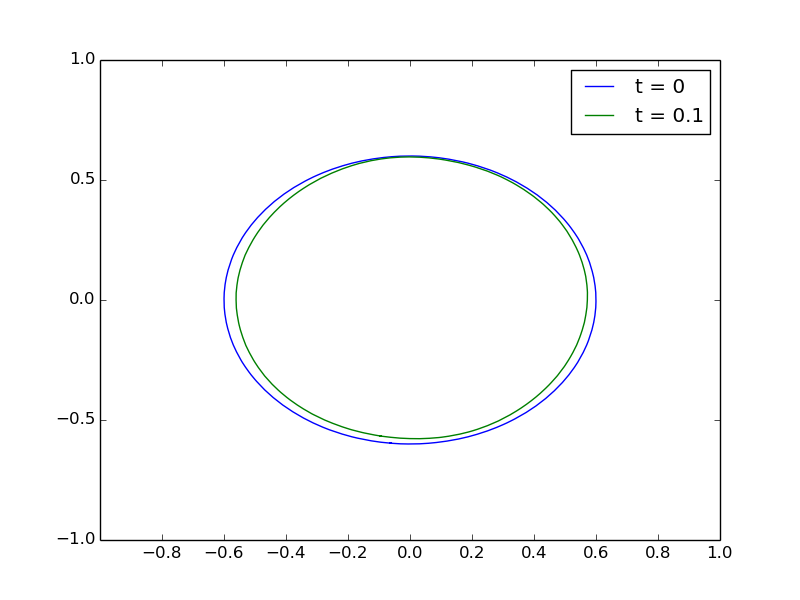}}}%
    \caption{Zero-level sets (one sample): $\epsilon = 0.1$, $h \approx 0.044$, and $\tau = 0.001$.}
    \label{fig:my_label3}
\end{figure}

\captionsetup[subfigure]{labelformat=empty}
\begin{figure}[h!]%
    \renewcommand{\thefigure}{5.\arabic{figure}}
    \pagecolor{white}
    \centering
    \subfloat[\centering $\delta = 1$]{{\includegraphics[width=6cm]{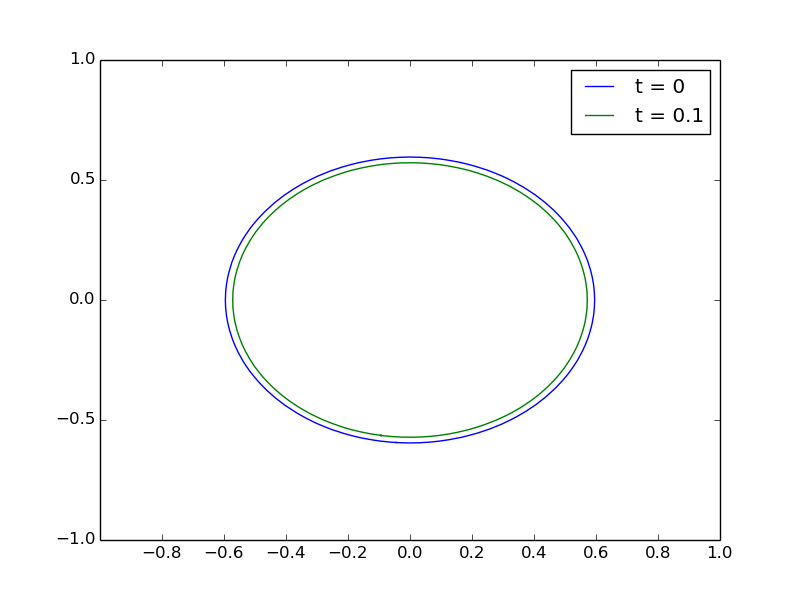}}}%
    \qquad
    \subfloat[\centering $\delta = 10$]{{\includegraphics[width=6cm]{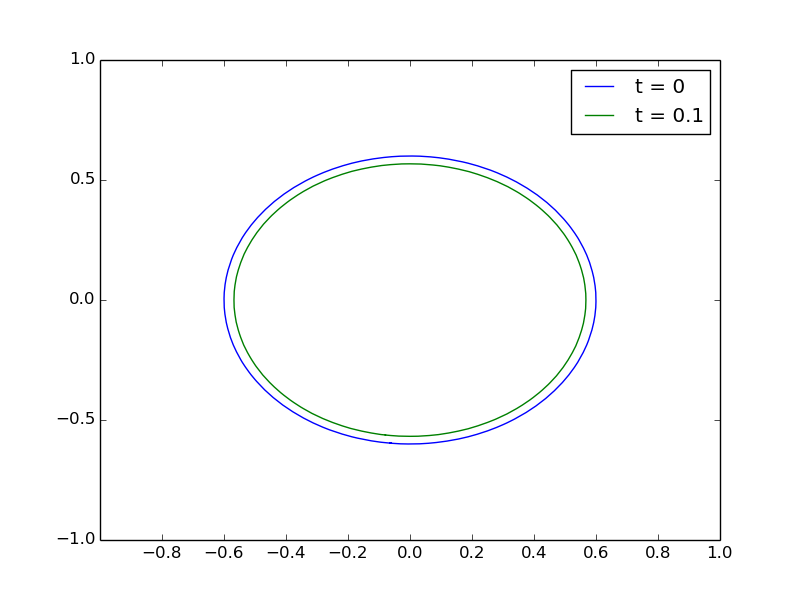}}}%
    \caption{Zero-level sets (average): $\epsilon = 0.1$, $h \approx 0.044$, and $\tau = 0.001$.}
    \label{fig:my_label4}
\end{figure}
\newpage
\textbf{Test 2:} For this test, we used the initial condition $u^0_h = P_hu_0$ where
$$
u_0(x, y) = \tanh \left ( \frac{\sqrt{(x/0.7)^2 + (y/0.65)^2} - 1}{\sqrt{2} \epsilon} \right )
$$
and again, the diffusion term is $g(u) = u$. The figures (\ref{fig:my_label5}) and (\ref{fig:my_label6}) show the $\mathbb{E} L^2$ and $\mathbb{E} H^1$ stability results of the numerical solution in one sample and the average of samples, respectively. Since the initial condition takes time to become a circle, the curves in these figures require a larger final time than in the previous test to stabilize.
\captionsetup[subfigure]{labelformat=empty}
\begin{figure}[h!]%
    \renewcommand{\thefigure}{5.\arabic{figure}}
    \pagecolor{white}
    \centering
    \subfloat[\centering $\delta = 1$]{{\includegraphics[width=6cm]{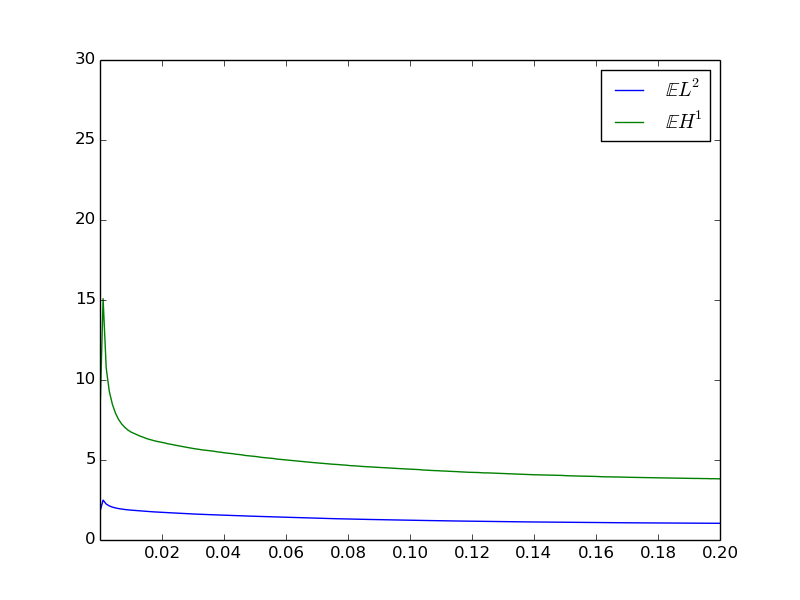}}}%
    \qquad
    \subfloat[\centering $\delta = 10$]{{\includegraphics[width=6cm]{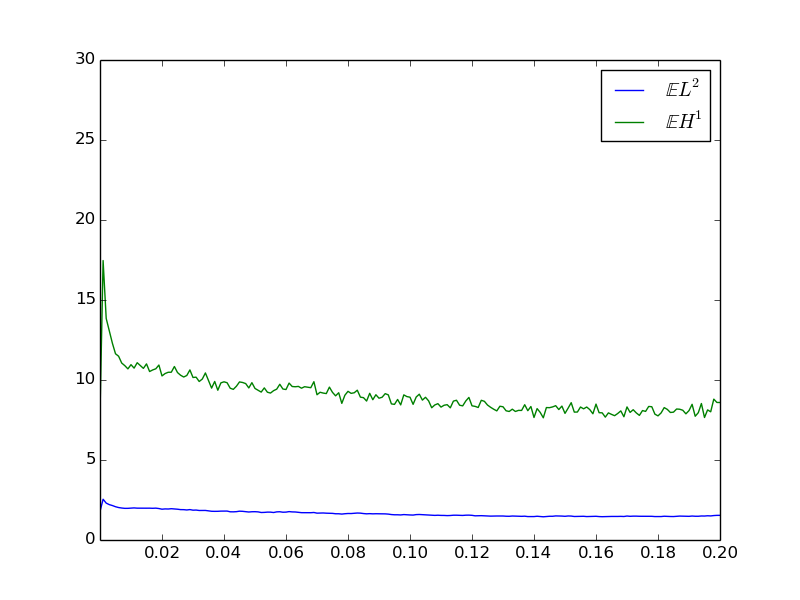}}}%
    \caption{$\mathbb{E} L^2$ and $\mathbb{E} H^1$ stability curves (one sample): $\epsilon = 0.1$, $h \approx 0.044$, and $\tau = 0.001$.}
    \label{fig:my_label5}
\end{figure}

\captionsetup[subfigure]{labelformat=empty}
\begin{figure}[h!]%
    \renewcommand{\thefigure}{5.\arabic{figure}}
    \pagecolor{white}
    \centering
    \subfloat[\centering $\delta = 1$]{{\includegraphics[width=6cm]{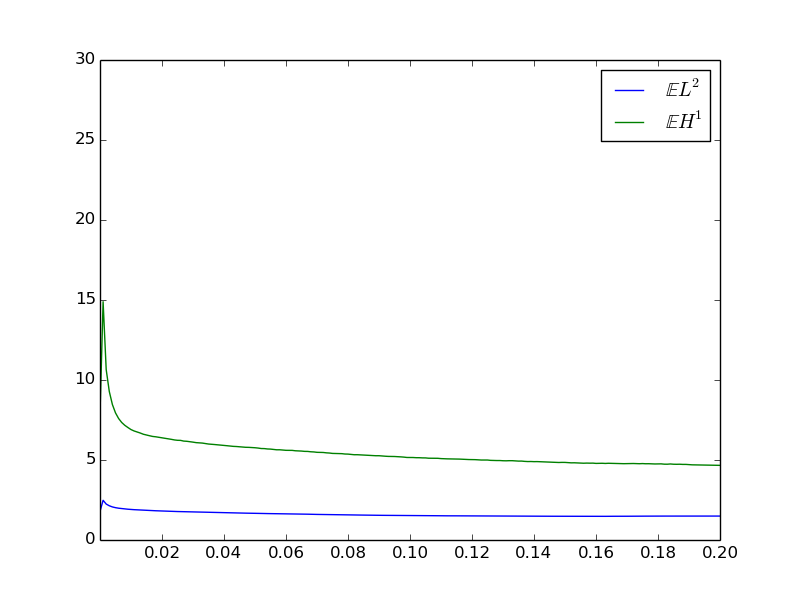}}}%
    \qquad
    \subfloat[\centering $\delta = 10$]{{\includegraphics[width=6cm]{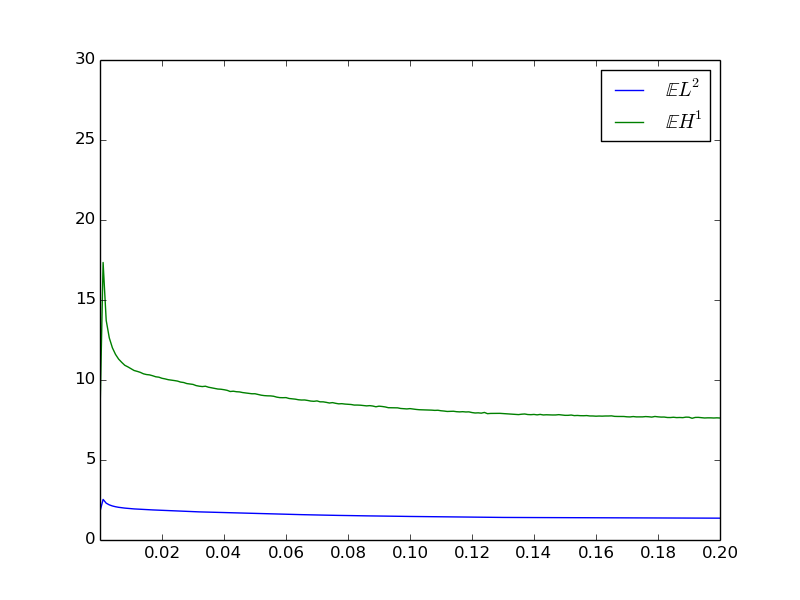}}}%
    \caption{$\mathbb{E} L^2$ and $\mathbb{E} H^1$ stability curves (average): $\epsilon = 0.1$, $h \approx 0.044$, and $\tau = 0.001$.}
    \label{fig:my_label6}
\end{figure}
Then below these, the figures (\ref{fig:my_label7}) and (\ref{fig:my_label8}) show zero-level sets of one sample and the average of samples, respectively. The numerical solutions approach a stable circle, which occurs faster for larger diffusion intensities.
\captionsetup[subfigure]{labelformat=empty}
\begin{figure}[h!]%
    \renewcommand{\thefigure}{5.\arabic{figure}}
    \pagecolor{white}
    \centering
    \subfloat[\centering $\delta = 1$]{{\includegraphics[width=6cm]{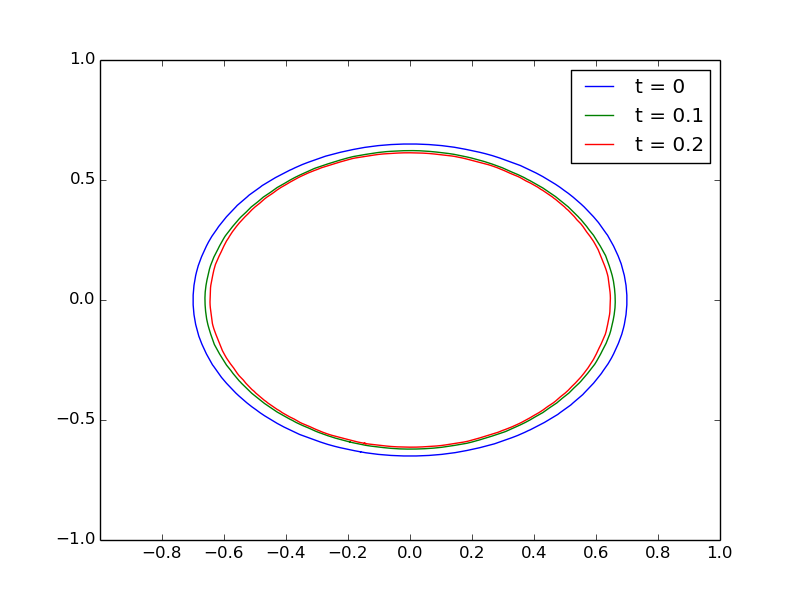}}}%
    \qquad
    \subfloat[\centering $\delta = 10$]{{\includegraphics[width=6cm]{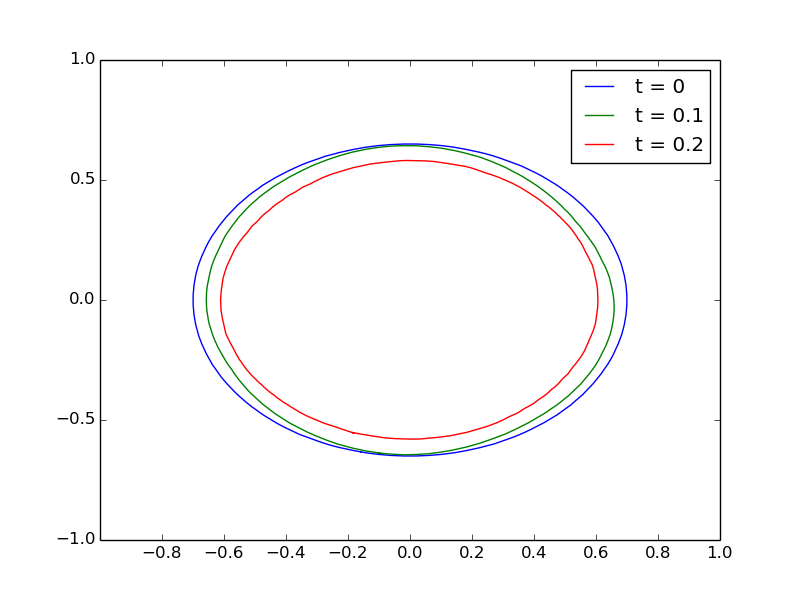}}}%
    \caption{Zero-level sets (one sample): $\epsilon = 0.1$, $h \approx 0.044$, and $\tau = 0.001$.}
    \label{fig:my_label7}
\end{figure}

\captionsetup[subfigure]{labelformat=empty}
\begin{figure}[h!]%
    \renewcommand{\thefigure}{5.\arabic{figure}}
    \pagecolor{white}
    \centering
    \subfloat[\centering $\delta = 1$]{{\includegraphics[width=6cm]{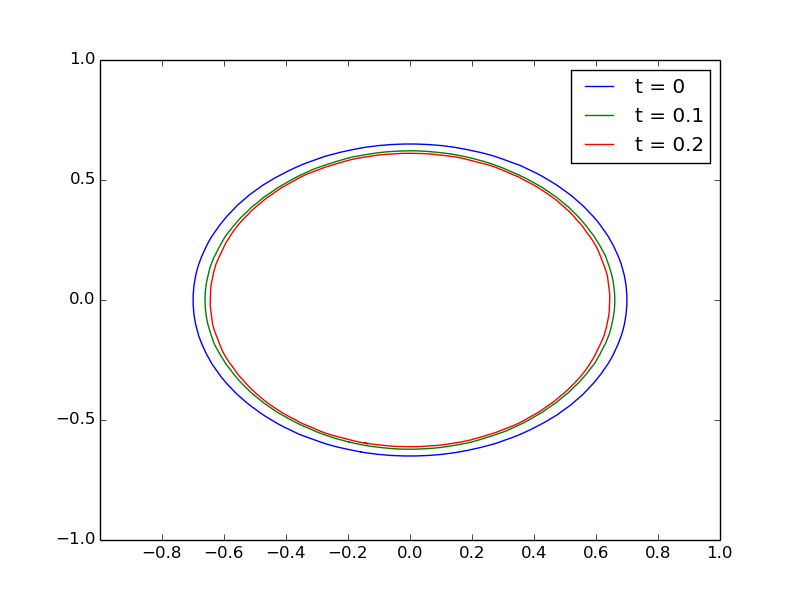}}}%
    \qquad
    \subfloat[\centering $\delta = 10$]{{\includegraphics[width=6cm]{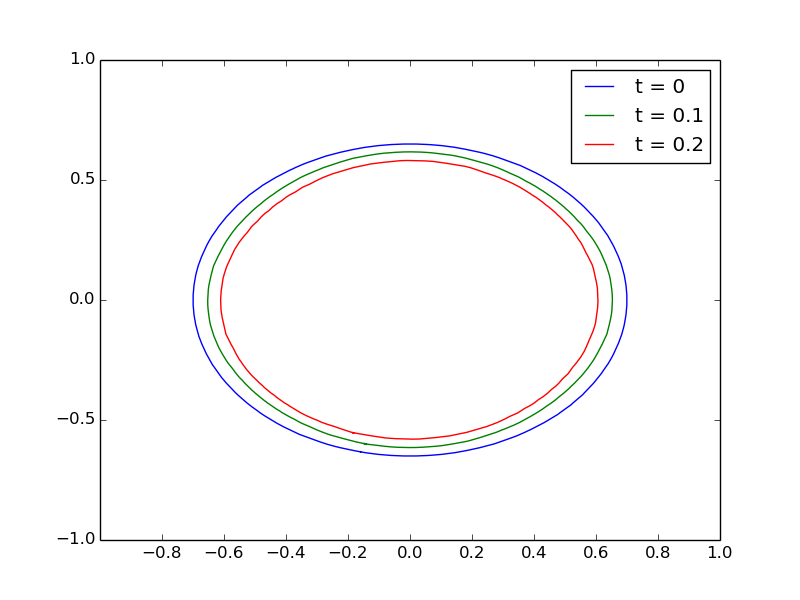}}}%
    \caption{Zero-level sets (average): $\epsilon = 0.1$, $h \approx 0.044$, and $\tau = 0.001$.}
    \label{fig:my_label8}
\end{figure}
\newpage
\textbf{Test 3:}
For this test, we used the initial condition $u^0_h = P_hu_0$ where
$$
u_0(x, y) = \tanh \left ( \frac{(\sqrt{x^2/0.7 + y^2/0.1} - 1)(\sqrt{x^2/0.1 + y^2/0.7} - 1)}{\sqrt{2}\epsilon} \right )
$$
with $\epsilon = 0.05$ and the diffusion term is $g(u) = \sqrt{u^2 + 1}$.
We computed the $L^{\infty}\mathbb{E} L^2 $,
and $\mathbb{E}L^2 H^1$ errors which denote
\begin{align}\label{eqn_error}
\left ( \max_{0 \leq n \leq N} \mathbb{E} \left[ \lVert E^n \rVert^2_{L^2(\mathcal{D})} \right] \right )^{1/2} \text{and} \quad \left (\mathbb{E} \left[ \tau\sum_{n=1}^N \lVert \nabla E^n \rVert^2_{L^2(\mathcal{D})}\right] \right)^{1/2},
\end{align}
respectively. Table (\ref{fig:Table_one}) below contains these errors. The final time is $T = 1/10^4$.
\vspace{1cm}
\begin{table}[h!]
\centering
\begin{tabular}{|l|l|l|l|l|l|l|}
\hline
$h$              & $L^{\infty} \mathbb{E} L^2$ & Order                      & $\mathbb{E}L^2H^1$ & Order                        \\ \hline
$0.2\sqrt{2}$    & 0.89614673     &                    & 13.81865669 &           \\ \hline
$0.1\sqrt{2}$    & 0.23059268     & 1.95838825         & 7.212195952 & 0.93810688         \\ \hline
$0.05\sqrt{2}$   & 0.06355413     & 1.85928886         & 3.674838827 & 0.97275762         \\ \hline
$0.025\sqrt{2}$  & 0.01715789     & 1.88911367         & 1.946229545 & 0.91699910  \\ \hline
\end{tabular}
\caption{Strong spatial errors and error orders: $\epsilon = 0.05$, $\delta = 1$, $\tau = 1/10^6$.}
\label{fig:Table_one}
\end{table}


\section{Conclusion}\label{sec14}
In this manuscript, we design a numerical scheme for the stochastic Cahn-Hilliard equation with multiplicative noise. The scheme utilizes the interpolation operator to handle the interaction between the drift term and the diffusion term, and is proven to maintain some stability results and higher moment results. Based on these stability results, we construct a probability one set such that the error estimates in the discrete $H^{-1}$-norm hold on this set.

Future work will remove the probability one set, i.e., establishing the error estimates in the whole probability space. The key step is to prove the higher moment bounds for $H^1$-norm. This is an open question for this type of stochastic Cahn-Hilliard equation and some new techniques need to be brought in.

%
%
%
\section*{Acknowledgments}
The work of Y. Li and C. Prachniak was partially supported by the National Science Foundation under grant DMS-2110728. The work of Y. Zhang was partially supported by the National Science Foundation under grant DMS-2111004. We would like to thank the anonymous referees for their valuable comments and suggestions.

\end{document}